\newtheorem{theorem}{Theorem}[section]
\newtheorem{lemma}[theorem]{Lemma}
\newtheorem{proposition}[theorem]{Proposition}
\newtheorem{corollary}[theorem]{Corollary}
\theoremstyle{definition}
\newtheorem{example}[theorem]{Example}
\theoremstyle{remark}
\newtheorem{remark}[theorem]{Remark}
\numberwithin{equation}{section}
\newcommand{\nat}{\mathbb{N}}                                                   
\newcommand{\rzecz}{\mathbb{R}}                                                 
\newcommand{\zesp}{\mathbb{C}}                                                  
\newcommand{\iloskal}[2]{\left<{#1},{#2}\right>}                                
\DeclareMathOperator{\MojeIm}{Im}                                               
\newcommand{\norma}[2]{{\left\Arrowvert{#2}\right\Arrowvert}_{#1}}              
\newcommand{\pstwowar}[2]{\mathbb{P}\left({#1}\big|{#2}\right)}                 
\DeclareMathOperator{\Aut}{Aut}
\newcommand{\genbin}[2]{{#1 \brack #2}}
\newcommand{\genbinab}{{\alpha \brack \beta}}
\newcommand{\genbinba}{{\beta \brack \alpha}}
\newcommand{\jednorod}[1]{P_{#1}^{\textrm{h}}(V)}                                   
\newcommand{\chk}{\textrm{C}^\ast(H)^K}
\newcommand{\cgw}{\textrm{C}^\ast}
\newcommand{\dedt}{\frac{\partial}{\partial t}}
\newcommand{\ptplus}[2]{p_t^+(#1,#2)}
\newcommand{\symmc}{\textrm{Sym}(m,\zesp)}
\newcommand{\sph}[1]{\Phi_{\mathbf{#1}}}                                         
\newcommand*\pFq[6][8]{%
	\begingroup 
	\pFqmuskip=#1mu\relax
	\mathcode`\,=\string"8000
	\begingroup\lccode`\~=`\,
	\lowercase{\endgroup\let~}\pFqcomma
	{}_{#2}F_{#3}{\left(\genfrac..{0pt}{}{#4}{#5};#6\right)}%
	\endgroup
}
\newcommand{\pFqcomma}{\mskip\pFqmuskip}
\begin{document}

\title[]{Multiplicity free actions, birth and death processes on partitions, and Biane's quantum Ornstein-Uhlenbeck semigroups}

\author[W.Matysiak]{Wojciech Matysiak}
\address{Wydzia{\l} Matematyki i Nauk Informacyjnych\\
Politechnika Warszawska\\
ul. Ko\-szy\-ko\-wa 75\\
00-662 Warsaw, Poland} \email{matysiak@mini.pw.edu.pl}

\author[M.\'{S}wieca]{Marcin \'{S}wieca}
\address{Wydzia{\l} Matematyki i Nauk Informacyjnych\\
Politechnika Warszawska\\
ul. Ko\-szy\-ko\-wa 75\\
00-662 Warsaw, Poland} \email{M.Swieca@mini.pw.edu.pl}

\thanks{Supported in part by the National Science Centre (Poland), Grant 2016/21/B/ST1/00005.}

\subjclass[2010]{Primary: 60J25; secondary: 46L53, 05A10, 22E30.}

\keywords{Birth and death processes, Young diagrams, Gelfand pairs, Heisenberg group}


\begin{abstract}
We introduce a family of multivariate continuous-time pure birth and pure death chains, with birth and death rates defined in terms of the generalized binomial coefficients for multiplicity free actions. 
The state spaces for some of the introduced processes are some sets of partitions (equivalently, Young diagrams). The chains turn out to be the classical Markov processes obtained by restricting Biane's quantum Ornstein-Uhlenbeck semigroups to commutative $\cgw$-algebras related to Gelfand pairs built on Heisenberg groups.
\end{abstract}

\maketitle

\section{Introduction}\label{s:intro}

The principal goal of this paper is to present a family of multivariate pure birth and pure death processes, with birth and death rates defined using  certain generalization of the binomial coefficients. The generalized binomial coefficients, introduced by Yan and studied in great detail by Benson and Ratcliff (see the references throughout this article), are inherently connected with an important class of group actions (the multiplicity free actions). The existence of a group action structure provides a natural framework for introducing some symmetries into the analyzed processes, what may be of interest from the point of view of potential applications.

In some cases, the state space for the birth and death processes we deal with is the set of partitions (of the 
length not greater than a fixed number). Since
the partitions can be represented as Young (Ferrer) diagrams, in those cases the processes we construct can be viewed as jump processes on the set of Young diagrams with 
number of rows not exceeding a fixed number, with jumps of two types: one either adds a box to a Young diagram (for the pure birth processes), or one removes a box from a diagram (for the pure death processes). In some cases, the dynamics on the set of Young diagrams is given by the generalized binomial coefficients for Jack polynomials (Schur polynomials in particular), studied by Lassalle (\cite{Lassalle1990}, \cite{Lassalle1998}), Okounkov and Olshanski \cite{OkounkovOlshanski1997}.  

The original motivation of this article was our interest in Biane's construction from \cite{biane1998} (see also \cite{biane1996}) of some  non-commutative analogues of the classical Ornstein-Uhlenbeck semigroups. They are certain semigroups of positive contractions on a (non-commutative) $\cgw$-algebra, which are related to the semigroup of a non-commutative Brownian motion (a non-commutative version of the heat semigroup) in a similar way as the classical Ornstein-Uhlenbeck semigroups are related to the semigroup of the classical Brownian motion. 
Let us mention that besides Biane's quantum Ornstein-Uhlenbeck semigroups, a number of others have been defined with the same name, see e.g. \cite{CiprianiFagnolaLindsay} or \cite{KoPark}.

Generally speaking, the semigroups of completely positive contractions may be of interest from the point of view of classical probability theory as they often give rise, via restrictions to commutative sub-$\cgw$-algebras, to interesting classical Markov processes. Some results from Biane's work \cite{biane1998} serve as representative examples of such a phenomenon (see also \cite{biane1996}, \cite{matysiakswieca1}, \cite{matysiakswieca2} and \cite{matysiakhyper})
; likewise, the birth and death processes appearing in Section \ref{s:examples} of this paper, in the connection with Theorem \ref{thm:connection}, can be regarded in the same fashion.

The commutative sub-$\cgw$-algebras Biane uses to restrict his Ornstein-Uhlenbeck semigroups to, are related to some Gelfand pairs $(K,H)$,  associated with Heisenberg groups $H$ ($K$ denotes a
closed subgroup of a unitary group). Specifically, Biane computes the relevant restrictions for the realization $H=\zesp^n\times\rzecz$ of the Heisenberg group and two choices of $K$, namely $K=U(n)$ and $K=\textrm{SO}(\rzecz,n)\times U(1)$ (for a restricted range of $n$). In the first case, the classical Markov processes turn out to be a pure birth process known as Yule process, and a pure death process, both on non-negative integers. In the second case, Biane obtained birth and death rates of a bivariate pure birth and a bivariate pure death process.

Our contribution in the present paper is twofold. First, we were able to find birth and death rates, as well as semigroups of transition probabilities, of the pure birth and pure death processes arising from the restriction of Biane's quantum Ornstein-Uhlenbeck semigroups to the commutative $\cgw$-algebra generated by any Gelfand pair $(K,H)$ (for any admissible subgroups $K$), thus completing and extending   Biane's results from \cite{biane1998}. By linking the quantum Ornstein-Uhlenbeck semigroups with the  well-developed theory of Gelfand pairs associated with Heisenberg groups, we identified and expressed the birth and death intensities in terms of the generalized binomial coefficients for multiplicity free actions; this, in turn, enabled us to explicitly compute them for a wide range of examples. 

Second, we provide an elementary computation (with no appeal to Heisenberg groups, Gelfand pairs or quantum Ornstein-Uhlenbeck semigroups) of the same semigroups of transition probabilities that we found when extending the results from \cite{biane1998}. Here our point of departure are the intensities, given in terms of the generalized binomial coefficients. The relevant part of the present article precedes the part dealing with the more advanced material, thus making it possible to be read for its own sake.

The general layout of the paper is as follows. We begin in Section \ref{s:multi} with an introduction of the generalized binomial coefficients for multiplicity free actions. In Section \ref{s:birth-death-Kaction} we use the coefficients to define intensities of some multivariate pure birth and pure death processes; in the same section we compute the relevant semigroups of transition probabilities in an elementary way. In Section \ref{s:examples} some explicit examples illustrating the theory from the previous section are presented. In Section \ref{s:heis} we refer to a more advanced material and report Biane's results on quantum Ornstein-Uhlenbeck semigroups. We also prove there that restricting the quantum semigroups to commutative sub-$\cgw$-algebras built from Gelfand pairs, yields exactly the same classical Markov processes as we defined in Section \ref{s:birth-death-Kaction}. All auxiliary results along with their proofs are collected in the last section.

\section{Multiplicity free actions, invariant polynomials and generalized binomial coefficients}\label{s:multi}

Consider a finite dimensional Hermitian inner product space $\left(V,\iloskal{\cdot}{\cdot}\right)$ with complex dimension $\dim_\zesp V=n$, and a closed subgroup $K$ of the group $U(V)$ of unitary operators on $V$. By $k.z$ we will be denoting the natural action of $k\in K$ on a vector $z\in V$. The associated representation of $K$ on the space $\zesp[V]$ of holomorphic polynomials on $V$ is given by
\begin{equation}\label{polyrep}
(k.p)(z)=p\left(k^{-1}.z\right)
\end{equation}
for $k\in K$, $p\in\zesp[V]$ and $z\in V$, and it generates a decomposition of $\zesp[V]$ into an algebraic direct sum 
\begin{equation}\label{decompos}
\zesp[V]=\bigoplus_{\lambda\in\Lambda} P_\lambda
\end{equation}
of finite-dimensional $K$-irreducible subspaces $P_\lambda$ ($\Lambda$ is a countably infinite index set parametrizing the decomposition). 

Clearly, the spaces $\jednorod{m}$ of homogeneous polynomials of degree $m$ on $V$ are $U(V)$-invariant, hence each $P_\lambda$ is a subspace of some $\jednorod{m}$. More precisely, if $|\lambda|$ stands for the degree of homogeneity of the polynomials from $P_\lambda$, then $P_\lambda\subset\jednorod{|\lambda|}$. Let us also write $0\in \Lambda$ for the index with $P_0=\jednorod{0}\cong\zesp$, and $d_\lambda$ for the dimension of $P_\lambda$. 

In a series of papers (\cite{BJR92}, \cite{BensonRatcliff1996}, \cite{BensonRatcliff98}, \cite{BensonRatcliff2000}, \cite{BensonRatcliff2004}, \cite{BensonRatcliff2009}, \cite{BensonHoweRatcliff}), Benson and Ratcliff, along with some coauthors, studied a special class of $K$-actions (named \emph{multiplicity free}), originally for their connection with analysis on the Heisenberg group. The action of $K$ on $V$ is said to be multiplicity free when the representations of $K$ on $P_\alpha$ and $P_\beta$ (see \eqref{decompos}) are inequivalent for $\alpha\ne\beta$. That is, no irreducible representation of $K$ occurs more than once in $\zesp[V]$. One can show that then the decomposition \eqref{decompos} is canonical. 

Multiplicity free actions are well-studied, because, \emph{inter alia},  of their importance in representation theory and invariant theory (see \cite{Howe1995}); in particular, they have been completely classified (see \cite{kac1980}, \cite{BensonRatcliff1996}, and \cite{Leahy}).

Throughout the paper, we will assume that the action of $K$ on $V$ is multiplicity free. Now we will summarize the elements of the theory developed by Benson and Ratcliff (and their coauthors), which will be relevant to our considerations.

\subsection{Some polynomials associated with the $K$-action}\label{ss:polynomials}

We will be interested in $K$-invariant polynomials. Observe, however, that there are no non-constant $K$-invariants in $\zesp[V]$. Indeed, if there existed such $p\in\zesp[V]$, the space $\zesp p\cong\zesp$ would appear in the decomposition \eqref{decompos}, contrary to the assumption of multiplicity-freeness. Therefore we 
consider the larger ring $\zesp[V_\rzecz]$ of complex valued polynomial functions on the underlying real vector space $V_\rzecz$ of $V$, and accompanying algebra $\zesp[V_\rzecz]^K$ of $K$-invariant elements of $\zesp[V_\rzecz]$. The latter is non-trivial, with
\begin{equation}\label{gamma}
\gamma(z)=\iloskal{z}{z}
\end{equation}
being an example of element of $\zesp[V_\rzecz]^K$ for any $K$-action. In fact, we are able to describe all elements of $\zesp[V_\rzecz]^K$. To this end, we endow $\zesp[V]$ with the inner product $\iloskal{\cdot}{\cdot}_{1}$, where $\iloskal{\cdot}{\cdot}_{a}$ for any positive $a$ denotes the Fock inner product
\begin{equation}\label{fockinner}
\iloskal{f}{g}_{a}=\left(\frac{a}{\pi}\right)^n\int_V f(z)\overline{g(z)}\exp{[-a\gamma(z)]}\textrm{d}z
\end{equation}
($\textrm{d}z$ is Lebesgue measure on $V_\rzecz$). Next, for any $\lambda\in\Lambda$ and any orthonormal basis $\{v_1,\ldots,v_{d_\lambda}\}$ of $(P_\lambda,\iloskal{\cdot}{\cdot}_1)$, we define an $\rzecz^+$ valued polynomial from $\zesp[V_\rzecz]$
\begin{equation}\label{plambda}
p_\lambda(z)=\frac{1}{d_\lambda}\sum_{j=1}^{d_\lambda} v_j(z) \overline{v_j(z)}.
\end{equation}
As it was proved in \cite[Proposition 3.9]{BJR92}, this definition is independent of the basis chosen for $P_\lambda$ (which implies that $p_\lambda$ is $K$-invariant as the action of $K$ on $\{v_1,\ldots,v_{d_\lambda}\}$ yields another orthonormal basis of $P_\lambda$), and $\{p_\lambda\}_\lambda$ is itself a vector space basis for $\zesp[V_\rzecz]^K$. Clearly, $p_\lambda$ is homogeneous of degree $2|\lambda|$.

One can construct another canonical basis $\{q_\lambda\}_\lambda$ for $\zesp[V_\rzecz]^K$, applying the Gram--Schmidt orthogonalization process in the space $\left(\zesp[V_\rzecz]^K,\iloskal{\cdot}{\cdot}_1\right)$ to $\{p_\lambda\}_\lambda$. This of course requires an ordering of the index set $\Lambda$ and it was proved in \cite[Proposition 4.2]{BJR92} that any one in which $\alpha$ precedes $\beta$ whenever $|\alpha|<|\beta|$ (the indices $\lambda$ with the same value of $|\lambda|$ can be ordered arbitrarily), together with normalization $q_\lambda(0)=1$, produces the same  set $\{q_\lambda\}_\lambda$. 
Observe that $(-1)^{|\lambda|} p_\lambda$ is the homogeneous component of highest degree in $q_\lambda$, that is
\begin{equation}\label{homcomphighdeg}
q_\lambda=(-1)^{|\lambda|}p_\lambda + \textrm{ lower order terms.}
\end{equation}

\subsection{Generalized binomial coefficients associated with the $K$-action on $V$}\label{ss:genbin}

Thus we have defined two canonical bases, $\{p_\lambda\}$ and $\{q_\lambda\}$, for the vector space $\mathbb{C}[V_\rzecz]^K$. The first consists of some homogeneous polynomials, and the second of some orthogonal ones. The interplay between these two bases allows to introduce certain generalization of the binomial coefficients associated with the action of $K$ on $V$. Writing $q_\lambda$'s as linear combinations of $p_\lambda$'s
\begin{equation}\label{qinp}
q_\lambda(z)=\sum_{|\alpha|\le|\lambda|} (-1)^{|\alpha|}\genbin{\lambda}{\alpha} p_\alpha(z)
\end{equation}
we in fact define some real numbers $\genbin{\lambda}{\alpha}$ for any $\lambda, \alpha\in\Lambda$ with $|\lambda|\ge|\alpha|$ (additionally, we extend the definition to all of $\Lambda\times\Lambda$ by putting $\genbin{\lambda}{\alpha}=0$ for $|\alpha|>|\lambda|$). The 
numbers are called the \emph{generalized binomial coefficients for the multiplicity free action of $K$ on $V$}, which is justified by the fact that for $K=U(V)$ they boil down to the ordinary binomial coefficients (see Example \ref{ex:ordbincoeff}).

The generalized binomial coefficients were introduced by Yan in \cite{yan_unpub}; Benson and Ratcliff proved that they are rational numbers \cite{BensonRatcliff2000} 
(the factor $(-1)^{|\alpha|}$ in \eqref{qinp} guarantees they are non-negative), and 
studied their rich combinatorial structure \cite{BensonRatcliff98}. 
We will reveal many properties of the coefficients throughout this paper. For now, let us only observe that \eqref{homcomphighdeg} implies that if $|\alpha|=|\beta|$ then
\begin{equation}\label{zeroone}
\genbin{\alpha}{\beta}=
\begin{cases}
1,& \alpha=\beta,\\
0,& \alpha\ne\beta.
\end{cases}
\end{equation}

\begin{remark}
The $K$-invariant polynomials $(p_\lambda)_\lambda$ given by \eqref{plambda} essentially depend upon the decomposition \eqref{decompos} generated by the representation of $K$ on $\zesp[V]$ (and not upon the group $K$ itself). Therefore the generalized binomial coefficients defined by \eqref{qinp} also fundamentally depend on the decomposition \eqref{decompos}.
\end{remark}

\begin{remark}\label{rem:monoid}
Howe and Umeda (see \cite[p. 571]{HoweUmeda} or the end of Section 3 in \cite{BensonRatcliff2004}) proved that the set $\Lambda$ which parametrizes the decomposition of $\zesp[V]$ into $K$-irreducibles, is a free abelian monoid and that there exist 
an integer $r\ge 1$ and linearly independent weights $\eta_1,\dots,\eta_r$ such that 
$\Lambda=\nat\eta_1+\ldots+\nat\eta_r$. (The number $r$ is called the \emph{rank of the multiplicity free action} of $K$ on $V$.) 
Therefore, every $\lambda\in\Lambda$ can be identified with an element $(\lambda_1,\ldots,\lambda_r)$ of $\nat^r$; moreover, the identification can be made so that $|\lambda|=\lambda_1+\ldots+\lambda_r$. We are going to view $\Lambda$ in such a way for the remainder of this paper.
\end{remark}

(Throughout, $\nat$ denotes the non-negative integers, including zero.)

\begin{remark}\label{rem:conventions}
As we will rely on some formulas from the paper of Benson and Ratcliff \cite{BensonRatcliff98}, it is important to point out slight differences between some objects appearing in our paper and in \cite{BensonRatcliff98}. Benson and Ratcliff \cite{BensonRatcliff98} take $V\ni z\mapsto\frac{1}{2}\iloskal{z}{z}$ as the distinguished $K$-invariant element of $\zesp[V_\rzecz]$ (compare with \eqref{gamma}) and use the inner product $\iloskal{\cdot}{\cdot}_{1/2}$ (see \eqref{fockinner}) in their computations. It is not difficult to observe that the conventions they adopt lead to bases $\{\widetilde{p}_\lambda\}$ and $\{\widetilde{q}_\lambda\}$ for $\zesp[V_\rzecz]^K$, which are related to the ones we defined in Section \ref{ss:genbin} via
\[
\widetilde{p}_\lambda(z)=p_\lambda\left(\frac{z}{\sqrt{2}}\right),\ \widetilde{q}_\lambda(z)=q_\lambda\left(\frac{z}{\sqrt{2}}\right).
\]
This immediately implies that the generalized binomial coefficients from \eqref{qinp} are exactly the same as the ones defined by Benson and Ratcliff in \cite[(1.4)]{BensonRatcliff98}. Note that there are some typos in the analogous remarks placed on page 82 of \cite{BensonRatcliff98}.
\end{remark}

In what follows, when referring to a result from \cite{BensonRatcliff98}, we will in fact quote its equivalent version, in which we will use our conventions.

\section{Birth and death processes for the $K$-action}\label{s:birth-death-Kaction}

We shall first employ the theory described in Section \ref{s:multi} to define certain birth and death processes taking values in $\Lambda$. In the light of Remark \ref{rem:monoid}, we will view them as processes on $\nat^r$, where $r$ is the rank of $K$-action.

Consider the semigroup $(p^+_t)_{t\ge0}$ of transition probabilities of a birth process given implicitly by the transition rates $q^+(\alpha,\beta)=\partial p^+_t(\alpha,\beta)/\partial t|_{t=0}$ of the form
\begin{equation}\label{q+}
q^+(\alpha,\beta)=
\begin{cases}
\frac{d_\beta}{d_\alpha}\genbin{\beta}{\alpha}, & |\beta|=|\alpha|+1,\\
-n-|\alpha|, & \alpha=\beta,\\
0,& \textrm{otherwise,}
\end{cases}                     
\end{equation}
and the semigroup $(p_t^-)_{t\ge0}$ of transition probabilities of a death process defined analogously by the transition rates $q^-(\alpha,\beta)=\partial p^-_t(\alpha,\beta)/\partial t|_{t=0}$ of the form
\begin{equation}\label{q-}
q^-(\alpha,\beta)=
\begin{cases}
\genbinab, & |\alpha|=|\beta|+1,\\
-|\alpha|,& \alpha=\beta,\\
0,& \textrm{otherwise.}
\end{cases}
\end{equation}
From Corollary \ref{cor:sums_of_rates} and nonnegativity of the generalized binomial coefficients, it follows that the rates \eqref{q+} and \eqref{q-} are well-defined.

\begin{theorem}\label{thm:semibirth} The semigroups $(p_t^\pm)_t$ are given by the following formulas:
\begin{eqnarray}
p_t^+(\alpha,\beta) &=&
\begin{cases}
\genbin{\beta}{\alpha}\frac{d_\beta}{d_\alpha}\left(1-e^{-t}\right)^{|\beta|-|\alpha|}\left(e^{-t}\right)^{|\alpha|+n},&\textrm{ if}\ |\beta|\ge|\alpha|,\\
0,& \textrm{ otherwise,}
\end{cases}\label{ptplus}\\
p_t^-(\alpha,\beta) &=& 
\begin{cases}
\genbin{\alpha}{\beta}e^{-|\beta|t}\left(1-e^{-t}\right)^{|\alpha|-|\beta|},&\textrm{ if}\ |\alpha|\ge|\beta|,\\
0, & \textrm{otherwise.}
\end{cases}\label{ptminus}
\end{eqnarray}
\end{theorem}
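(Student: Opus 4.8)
The plan is to verify that the functions in \eqref{ptplus} and \eqref{ptminus} satisfy Kolmogorov's forward equations $\partial_t p_t^\pm(\alpha,\beta)=\sum_\gamma p_t^\pm(\alpha,\gamma)\,q^\pm(\gamma,\beta)$ associated with the rates \eqref{q+}, \eqref{q-}, together with the initial condition $p_0^\pm(\alpha,\beta)=\delta_{\alpha\beta}$, and then to appeal to uniqueness. By Corollary \ref{cor:sums_of_rates} the total rate leaving a state is $n+|\alpha|$ (birth) or $|\alpha|$ (death), which grows only linearly in $|\alpha|$; hence both pure chains are non-explosive and the forward system has a unique honest solution, so matching it identifies the semigroups.

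The initial condition is immediate: at $t=0$ the factor $1-e^{-t}$ vanishes, killing every term with $|\beta|\ne|\alpha|$, while for $|\beta|=|\alpha|$ the coefficient $\genbin{\beta}{\alpha}$ equals $\delta_{\alpha\beta}$ by \eqref{zeroone} (and $d_\beta/d_\alpha=1$ when $\alpha=\beta$). For the dynamics I would set $u=e^{-t}$ and differentiate. The diagonal cases $|\beta|=|\alpha|$ check directly, since then $p_t^+(\alpha,\alpha)=e^{-(n+|\alpha|)t}$, $p_t^-(\alpha,\alpha)=e^{-|\alpha|t}$ and the sums in the forward equations are empty. For $|\beta|>|\alpha|$, substituting \eqref{ptplus} into the birth forward equation and cancelling the common factor $\frac{d_\beta}{d_\alpha}u^{|\alpha|+n}(1-u)^{|\beta|-|\alpha|-1}$, the remaining $u$-dependent contributions collapse and the equation reduces to the single algebraic identity
\[
(|\beta|-|\alpha|)\,\genbin{\beta}{\alpha}=\sum_{|\gamma|=|\beta|-1}\genbin{\beta}{\gamma}\genbin{\gamma}{\alpha}.
\]
The parallel computation for \eqref{ptminus} reduces the death forward equation to
\[
(|\alpha|-|\beta|)\,\genbin{\alpha}{\beta}=\sum_{|\gamma|=|\beta|+1}\genbin{\alpha}{\gamma}\genbin{\gamma}{\beta}.
\]

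The crux, and the step I expect to be the main obstacle, is proving these two convolution identities for the generalized binomial coefficients. Both are the $j=|\beta|\pm1$ instances of the single weighted transitivity relation
\[
\sum_{|\gamma|=j}\genbin{\lambda}{\gamma}\genbin{\gamma}{\mu}=\binom{|\lambda|-|\mu|}{\,j-|\mu|\,}\genbin{\lambda}{\mu},\qquad |\mu|\le j\le|\lambda|,
\]
which I would isolate and prove as an auxiliary lemma. As a consistency check, for $K=U(V)$ the coefficients reduce to ordinary binomials (Example \ref{ex:ordbincoeff}) and the identity becomes the classical absorption relation $(b-a)\binom{b}{a}=b\binom{b-1}{a}$. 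To prove it in general I would use the combinatorial apparatus of Benson and Ratcliff \cite{BensonRatcliff98}: expanding the $q_\lambda$ in the homogeneous basis $\{p_\mu\}$ via \eqref{qinp} and exploiting the Pieri-type rules for multiplication by $\gamma$ on $\zesp[V_\rzecz]^K$ and for its Fock-adjoint lowering operator, whose compositions reproduce exactly the weighted sums above. Once this lemma is established, both forward equations hold identically in $t$, the initial data match, and non-explosion gives that \eqref{ptplus}--\eqref{ptminus} are indeed the transition semigroups $(p_t^\pm)_t$.
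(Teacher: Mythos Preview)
Your approach is essentially the same as the paper's: both use the forward Kolmogorov equations for the rates \eqref{q+}--\eqref{q-} and both hinge on the very same transitivity identity
\[
\sum_{|\gamma|=j}\genbin{\lambda}{\gamma}\genbin{\gamma}{\mu}=\binom{|\lambda|-|\mu|}{\,j-|\mu|\,}\genbin{\lambda}{\mu},
\]
which is precisely \cite[Corollary 3.4]{BensonRatcliff98} (quoted in the paper as \eqref{BRCorr34}), so you do not need to re-derive it. The only organizational difference is that the paper solves the forward system inductively in $|\beta|$ (which also handles the vanishing for $|\beta|<|\alpha|$ and yields uniqueness automatically from the triangular structure), whereas you verify the candidate formula and invoke non-explosion; the paper also notes that for $(p_t^-)_t$ the analogous argument runs through the \emph{backward} Kolmogorov equation rather than the forward one.
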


\begin{proof}
Fix $\alpha\in\Lambda$. By \eqref{q+}, the forward Kolmogorov equation
\begin{equation*}
\dedt p_t^+(\alpha,\beta)=\sum_{\lambda\ne\beta} p_t^+(\alpha,\lambda) q^+(\lambda,\beta)+ p_t^+(\alpha,\beta)q^+(\beta,\beta)
\end{equation*}
takes the form
\begin{equation}\label{kolmogorov}
\dedt p_t^+(\alpha,\beta)=\sum_{|\lambda|=|\beta|-1} \genbin{\beta}{\lambda}\frac{d_\beta}{d_\lambda}\ptplus{\alpha}{\lambda}
-(n+|\beta|)\ptplus{\alpha}{\beta}.
\end{equation}
We will solve \eqref{kolmogorov}, considering three cases.

In the first one, we assume that $|\beta|<|\alpha|$ and proceed by induction on $|\beta|$. If $|\beta|=0$ then \eqref{kolmogorov} becomes
$\dedt \ptplus{\alpha}{\beta}=-n \ptplus{\alpha}{\beta}$, and together with the initial condition $p_0^+(\alpha,\beta)=0$, implies \begin{equation}\label{ptplus0}
\ptplus{\alpha}{\beta}=0\quad\forall t.
\end{equation} 
Now we assume \eqref{ptplus0} for $|\beta|=0,1,\ldots,k$ with $\nat\ni k<|\alpha|$, and take $\beta\in\Lambda$ such that $|\beta|=k+1<|\alpha|$. By the induction hypothesis, for $|\lambda|=|\beta|-1=k$,  $\ptplus{\alpha}{\lambda}$ vanishes for all $t$, so \eqref{kolmogorov} becomes $\dedt\ptplus{\alpha}{\beta}=-(n+|\beta|)\ptplus{\alpha}{\beta}$, and we arrive at \eqref{ptplus0} since the initial condition $p_0^+(\alpha,\beta)=0$ holds. This completes the induction, and the proof of \eqref{ptplus0} for $|\beta|<|\alpha|$. 

As the second case, we consider $|\beta|=|\alpha|$, which, by the first case, leads to 
\begin{equation*}
\left\{
\begin{aligned}
\dedt\ptplus{\alpha}{\beta} &=-(n+|\beta|)\ptplus{\alpha}{\beta},\\
p_0^+(\alpha,\beta) &=\delta_{\alpha,\beta},
\end{aligned}
\right.
\end{equation*}
where $\delta_{\alpha,\beta}$ stands for the Kronecker delta. This leads to $\ptplus{\alpha}{\beta}=\exp\left[-(n+|\beta|)t\right]$ if $\beta=\alpha$, and to $\ptplus{\alpha}{\beta}=0$ for all $t$ otherwise,
what, by \eqref{zeroone}, proves \eqref{ptplus} for $|\beta|=|\alpha|$.

Finally, we consider $|\beta|\ge|\alpha|$ and prove \eqref{ptplus} by induction on $|\beta|$, starting from $|\beta|=|\alpha|$. The basis was proved above. For the inductive step we pick a non-negative integer $k$, and assume that $\ptplus{\alpha}{\beta}$ has the claimed form for $|\beta|=|\alpha|,\ldots,|\alpha|+k$. Take $\beta$ with $|\beta|=|\alpha|+k+1$. Combining \eqref{kolmogorov} and the inductive hypothesis we obtain
\begin{equation*}
\dedt\ptplus{\alpha}{\beta}=-(n+|\beta|)\ptplus{\alpha}{\beta}+\exp\left[-(|\alpha|+n)t\right]\frac{d_\beta}{d_\alpha}
\sum_{|\lambda|=|\beta|-1}\genbin{\beta}{\lambda}\genbin{\lambda}{\alpha}(1-e^{-t})^{|\lambda|-|\alpha|}.
\end{equation*}
Benson and Ratcliff \cite[Corollary 3.4]{BensonRatcliff98} proved that if $\alpha,\beta\in\Lambda$, $|\alpha|\le|\beta|$, then for all $l\in\{|\alpha|,\ldots,|\beta|\}$
\begin{equation}\label{BRCorr34}
\sum_{|\lambda|=l}\genbin{\beta}{\lambda}\genbin{\lambda}{\alpha}=\frac{(|\beta|-|\alpha|)!}{(|\beta|-l)!(l-|\alpha|)!}\genbin{\beta}{\alpha}.
\end{equation}
Therefore
\begin{equation*}
\dedt\ptplus{\alpha}{\beta}=-(n+|\beta|)\ptplus{\alpha}{\beta}+\genbin{\beta}{\alpha}(|\beta|-|\alpha|)\frac{d_\beta}{d_\alpha}
e^{-(|\alpha|+n)t}(1-e^{-t})^{|\beta|-|\alpha|-1}.
\end{equation*}
Solving this equation with the initial condition $p_0^+(\alpha,\beta)=0$ confirms that $\ptplus{\alpha}{\beta}$ has the claimed form for $|\beta|=|\alpha|+k+1$, completing the induction and the proof of \eqref{ptplus}.

We omit the proof of \eqref{ptminus}, since it proceeds along the same lines, with the backward Kolmogorov equation as a point of departure.
\end{proof}
A number of examples will be presented in the next section to shed some light on the scope of the multidimensional birth and death processes arising from the multiplicity free actions. Here we offer only the example that largely motivates the laid out theory.

\begin{example}\label{ex:ordbincoeff} Let $V=(\zesp^n,\iloskal{\cdot}{\cdot}_{\zesp^n})$, where  $\iloskal{u}{v}_{\zesp^n}=\sum_{i=1}^n u_i \overline{v_i}$. Consider the usual action $k.z=kz$ ($k\in U(n), z\in\zesp^n$) of $K=U(V)=U(n)$ on $V$. It is well known that under this action, the ring $\zesp[V]$ decomposes as
	\begin{equation*}
	\zesp[V]=\bigoplus_{m\in\nat}P_m^{\textrm{h}}(\zesp^n)
	\end{equation*}
	(so here $\Lambda=\nat$ 
	and $r=1$). As the orthonormal basis for $P_m^{\textrm{h}}(\zesp^n)$ (with respect to $\iloskal{\cdot}{\cdot}_1$ - recall \eqref{fockinner}), take \begin{equation}\label{basis}
	\left\{\frac{z^\alpha}{\sqrt{\alpha!}}:\ \alpha=(\alpha_1,\ldots,\alpha_n)\in\nat^n, \sum_{i=1}^n\alpha_i=m\right\}
	\end{equation}
	(we are using the standard multi-index notation: $z^\alpha=z_1^{\alpha_1}\cdot\ldots\cdot z_n^{\alpha_n}$ for $z=(z_1,\ldots,z_n)\in\zesp^n$, and $\alpha!=\alpha_1!\cdot\ldots\cdot\alpha_n!$).
	As it is also well known that
	\[
	\dim P_m^{\textrm{h}}(\zesp^n)=\binom{m+n-1}{n-1},
	\]
	it follows easily from \eqref{plambda} that
	\begin{equation}\label{pmUn}
	p_m(z)=\binom{m+n-1}{n-1}^{-1}\sum_{|\alpha|=m}\frac{z^\alpha}{\sqrt{\alpha!}}\overline{\left(\frac{z^\alpha}{\sqrt{\alpha!}}\right)}
	=\frac{(n-1)!}{(m+n-1)!}\gamma^m(z)
	\end{equation}
	(recall the definition of $\gamma$ from \eqref{gamma}).
	The polynomials $(q_m)_m$ are obtained from $p_m$'s as an orthonormal sequence with respect to the Gaussian measure 
	$\exp(-\gamma(z))\textrm{d}z$. As $p_m$'s and the Gaussian kernel $\exp(-\gamma(z))$ are functions of $\sum|z_i|^2$, it is not difficult to observe (for details, see \cite[Proposition 6.2]{BJR92}) that this orthogonalization problem is equivalent to the classical one of orthogonalizing the monomials $(x^k)_{k=0,1,\ldots}$ in $L^2([0,\infty),x^{n-1}e^{-x})$. Since the latter yields the generalized Laguerre polynomials $(L_k^{(n-1)})_k$ of order $n-1$
	\begin{equation}\label{laguerre}
	L_k^{(n-1)}(x)=\frac{(n)_k}{k!} \pFq{1}{1}{-k}{n}{x}=\sum_{i=0}^k (-1)^i\binom{k+n-1}{k-i}\frac{x^i}{i!},\quad x\in\rzecz,
	\end{equation}
	we get the generalized Laguerre polynomials $L_m^{(n-1)}(\gamma(z))$, normalized so that $L_m^{(n-1)}(0)=1$:
	\begin{equation}\label{qmUn}
	q_m(z)=(n-1)!\sum_{j=0}^m \binom{m}{j}\frac{[-\gamma(z)]^j}{(j+n-1)!}.
	\end{equation} 
	Looking at \eqref{qinp}, \eqref{pmUn} and \eqref{qmUn}, we notice that in this case
	\begin{equation*}
	\genbin{m}{j}=\binom{m}{j},
	\end{equation*}
	so that the generalized binomial coefficients for the standard action of $K=U(n)$ on $V=\zesp^n$ are the ordinary binomial coefficients. Naturally, this fact justifies the terminology.
	
	Now we immediately see that for the usual action of $U(\zesp^n)$ on $\zesp^n$, Theorem \ref{thm:semibirth} boils down to the well-known facts that the semigroups $(p_t^+)_t$ of a pure birth process, and $(p_t^-)_t$ of a pure death process, on $\nat$, are
	\begin{equation}\label{pt+-Un}
	\begin{array}{r@{}l}
	p_t^+(k,l) &{}= \begin{cases}
	\binom{l+n-1}{k+n-1}\left(1-e^{-t}\right)^{l-k}\left(e^{-t}\right)^{k+n},&\textrm{ if}\ l\ge k,\\
	0,& \textrm{ otherwise,}
	\end{cases}\\
	p_t^-(k,l) &{}=\begin{cases}
	\binom{k}{l}e^{-lt}\left(1-e^{-t}\right)^{k-l},&\textrm{ if}\ k\ge l,\\
	0, & \textrm{otherwise,}
	\end{cases}
	\end{array}
	\end{equation}	
	if their birth rates (resp. death rates) equal $k+n$ (resp. $k$). 
	In particular, the pure birth process is a Yule process with immigration ($n$ represents the rate of immigration, the individual birth rate is $1$).
\end{example}
An interesting feature of all pure birth and pure death processes emerging from the multiplicity free actions is exhibited in the following proposition.

\begin{proposition}\label{prop:sum}
If $(X_t)_t$ stands for the pure birth (resp. pure death) process defined by \eqref{q+} (resp. \eqref{q-}), then $(|X_t|)_t$ is the pure birth (resp. pure death) process with the transition semigroup $(p_t^+)_t$ (resp. $(p_t^-)_t$) from \eqref{pt+-Un}.
\end{proposition}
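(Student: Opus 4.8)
The plan is to realize $(|X_t|)_t$ as a \emph{lumping} of the $\Lambda$-valued chain and then read off its generator. Partition $\Lambda$ into the level sets $B_k=\{\beta\in\Lambda:|\beta|=k\}$, $k\in\nat$; the block-label process attached to this partition is precisely $(|X_t|)_t$, since $X_t\in B_k$ iff $|X_t|=k$. I will apply the standard strong-lumpability criterion for continuous-time chains: if, for every ordered pair of distinct blocks $B_i,B_j$, the total rate $\sum_{\beta\in B_j}q(\alpha,\beta)$ is the same for all representatives $\alpha\in B_i$, then the block process is Markov for every initial law, and its rate from $i$ to $j$ is this common value. This is exactly the structure we need, because Proposition \ref{prop:sum} asserts that $(|X_t|)_t$ is itself a Markov process with a prescribed semigroup.

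Verifying the criterion is immediate from the shape of the generators. For the birth process, \eqref{q+} shows that all off-diagonal mass leaving a state $\alpha$ is carried by jumps to states one level higher, so $B_{k+1}$ is the only block receiving positive rate from $B_k$, and the total such rate equals the entire off-diagonal mass $-q^+(\alpha,\alpha)=n+|\alpha|$. Conservativity of the rate matrix --- the content of Corollary \ref{cor:sums_of_rates}, which ensures $\sum_{|\beta|=|\alpha|+1}\tfrac{d_\beta}{d_\alpha}\genbinba=n+|\alpha|$ --- makes this quantity depend only on $k=|\alpha|$. Hence the level partition is lumpable and the lumped birth rate is $k\mapsto k+n$. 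The death process is identical: by \eqref{q-} every jump from $B_k$ lands in $B_{k-1}$, at total rate $-q^-(\alpha,\alpha)=|\alpha|$ (again the conservativity $\sum_{|\beta|=|\alpha|-1}\genbin{\alpha}{\beta}=|\alpha|$ of Corollary \ref{cor:sums_of_rates}), so the lumped death rate is $k\mapsto k$.

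It remains to recognize the one-dimensional chains so obtained. The lumped process has jumps only to neighbouring levels, hence is a pure birth (resp.\ pure death) chain on $\nat$, with rate $k+n$ (resp.\ $k$) out of state $k$. Because these rates grow at most linearly, there is no explosion, so each chain is uniquely determined by its rates and agrees with its minimal transition semigroup. These are precisely the rates of the $U(n)$ specialization in Example \ref{ex:ordbincoeff} (there $r=1$, $\Lambda=\nat$, and $\genbin{m}{j}=\binom{m}{j}$), whose transition semigroups are computed there to be the $(p_t^\pm)_t$ of \eqref{pt+-Un}. This is exactly the claim.

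The step I expect to be the only real subtlety is the verification that the off-diagonal row sum over a level is constant across that level; here it reduces to the conservativity already secured by Corollary \ref{cor:sums_of_rates}, so no new combinatorics is required. One could instead argue directly, summing the explicit kernels of Theorem \ref{thm:semibirth} to show that $\sum_{|\beta|=l}p_t^\pm(\alpha,\beta)$ depends only on $|\alpha|$, but that route demands the heavier dimension identity $\sum_{|\beta|=l}\genbin{\beta}{\alpha}d_\beta=\binom{l+n-1}{|\alpha|+n-1}d_\alpha$, which the lumpability argument neatly sidesteps.
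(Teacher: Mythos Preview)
Your proof is correct, and it takes a genuinely different route from the paper's own argument. The paper proceeds exactly along the alternative you sketch in your final paragraph: it sums the explicit transition kernels of Theorem~\ref{thm:semibirth} over $\{\beta:|\beta|=k\}$, invokes the dimension identity of Corollary~\ref{cor:aux1} (and Lemma~\ref{lem:sum} in the death case) to show that $\mathbb{P}(|X_t|=k\mid X_s=\alpha)$ depends on $\alpha$ only through $|\alpha|$, and then concludes the Markov property of $(|X_t|)_t$ from this. Your lumpability argument works entirely at the level of the generator and needs only the conservativity statements of Corollary~\ref{cor:sums_of_rates}, so it is strictly lighter on combinatorics and does not even require Theorem~\ref{thm:semibirth} as input. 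The trade-off is that the paper's approach also delivers the lumped transition probabilities \emph{explicitly} in one stroke, whereas you identify the lumped chain by its rates and then appeal to non-explosion and Example~\ref{ex:ordbincoeff} to recover the semigroup; both are perfectly sound, and your remark on linear growth is exactly what is needed to close that gap.
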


\begin{proof}
We will prove that $(|X_t|)_t$ is a Markov process and that its transition probabilities coincide with the relevant part of \eqref{pt+-Un}.

By Theorem \ref{thm:semibirth}, for $0\le s<t$, $k\in \nat$ and any $\alpha\in\Lambda$,
\begin{multline*}
\pstwowar{|X_t|=k}{X_s=\alpha}=\sum_{|\beta|=k} \genbin{\beta}{\alpha}\frac{d_\beta}{d_\alpha}\left(1-e^{-t}\right)^{|\beta|-|\alpha|}\left(e^{-t}\right)^{|\alpha|+n}=\\
\left(1-e^{-t}\right)^{k-|\alpha|}\left(e^{-t}\right)^{|\alpha|+n}\sum_{|\beta|=k} \genbin{\beta}{\alpha}\frac{d_\beta}{d_\alpha}=
\binom{k+n-1}{k-|\alpha|}\left(1-e^{-t}\right)^{k-|\alpha|}\left(e^{-t}\right)^{|\alpha|+n}
\end{multline*}
(the last equality follows from Corollary \ref{cor:aux1}). Hence the above conditional probability depends on $\alpha=(\alpha_1,\ldots,\alpha_r)\in\Lambda$ only through $|\alpha|=\alpha_1+\ldots+\alpha_r$, so
\begin{equation}\label{8may}
\pstwowar{|X_t|=k}{X_s}=\pstwowar{|X_t|=k}{|X_s|}.
\end{equation}
Therefore the transitions of $(|X_t|)_t$ are the same as the transitions of the Yule process with immigration from Example \ref{ex:ordbincoeff}. Since $(|X_t|)_t$ is a Markov process with respect to the natural filtration of $(X_t)_t$, from \eqref{8may} it also follows that $(|X_t|)_t$ is Markov with respect to its own natural filtration as well. This proves the part of the proposition about birth processes. 

In order to prove the remaining part about death processes, one can argue in the same manner, using Lemma \ref{lem:sum} instead of Corollary \ref{cor:aux1} along the way.
\end{proof}

\section{Examples}\label{s:examples}

In this section we provide explicit computations of the generalized binomial coefficients and the corresponding transition rates for some multiplicity free $K$-actions 
for proper subgroups $K$ of $U(V)$ (the case of $U(V)$ acting on $V$ was treated in Example \ref{ex:ordbincoeff}, in the previous section).

In Example \ref{ex:ordbincoeff}, the irreducible subspaces in $\zesp[V]$ were the full subspaces $P_m^{\textrm{h}}(\zesp^n)$. The 
first example we present here is in a sense antipodal, as its irreducible subspaces are one-dimensional. 

\begin{example}\label{ex:ntorus}
Let $V=(\zesp^n,\iloskal{\cdot}{\cdot}_{\zesp^n})$ and $K$ be the $n$-torus
\begin{equation*}
K=T(n)=U(1)^n=\left\{\textrm{diag}(e^{i\theta_1},\ldots,e^{i\theta_n}):(\theta_1,\ldots \theta_n)\in\rzecz^n\right\}.
\end{equation*}
Under the usual action $k.z=kz$ ($k\in K, z\in V$) of $K$, the ring $\zesp[V]$ decomposes into $T(n)$-invariant one-dimensional subspaces $\zesp z^{\mu}$ with $\mu=(\mu_1,\ldots,\mu_n)\in\nat^n$
\begin{equation*}
\zesp[V]=\bigoplus_{\mu\in\nat^n} \zesp z^{\mu}
\end{equation*} 
so $\Lambda=\nat^n$. Clearly, \eqref{plambda} and \eqref{basis} imply that the invariant polynomials are $p_\mu(z)=\prod_i |z_i|^{2\mu_i}/\mu_i!$. This time $p_\mu$'s are products of functions of $|z_i|^2$ (the Gaussian kernel as well), so it is not difficult to notice that the task of orthogonalizing the sequence $(p_\mu)_\mu$ reduces to the orthogonalization of the monomials $(x^k)_k$ in $L^2([0,\infty),e^{-x})$, and that, consequently, one arrives at $q_\mu(z)=\prod_i  L^{(0)}_{\mu_i}\left(|z_i|^2\right)$. Since
\begin{multline*}
q_\mu(z)=\prod_{j=1}^n\sum_{k=0}^{\mu_j}\binom{\mu_j}{k}\frac{(-|z_j|)^{2k}}{k!}
=\sum_{\substack{\kappa=(\kappa_1,\ldots,\kappa_n)\in\nat^n\\ \kappa_i\le\mu_i}} \binom{\mu_1}{\kappa_1}\ldots\binom{\mu_n}{\kappa_n}
\prod_{j=1}^n\frac{(-|z_j|)^{2\kappa_j}}{\kappa_j!}\\
=\sum_{\substack{\kappa=(\kappa_1,\ldots,\kappa_n)\in\nat^n\\ \kappa_i\le\mu_i}}\binom{\mu_1}{\kappa_1}\ldots\binom{\mu_n}{\kappa_n}(-1)^{|\kappa|} p_\kappa(z),
\end{multline*}
formula \eqref{qinp} implies that the generalized binomial coefficients for the usual action of $T(n)$ on $V=\zesp^n$ are
\begin{equation}\label{genbintorus}
\genbin{\mu}{\kappa}=\prod_{j=1}^n\binom{\mu_j}{\kappa_j}
\end{equation}
when $\kappa_j\le\mu_j$ for $j=1,\ldots,n$, and $\genbin{\mu}{\kappa}=0$ otherwise. This implies, in particular, that if $|\mu|=|\kappa|+1$ and the generalized binomial coefficient \eqref{genbintorus} does not vanish, then there is exactly one $i\in\{1,\ldots,n\}$ with $\mu_i=\kappa_i+1$ and for remaining $j\ne i$, $\mu_j=\kappa_j$. Hence, from \eqref{q+} and Theorem \ref{thm:semibirth}, it follows that if birth rates of a pure birth process on $\nat^n$ are
\begin{equation}\label{q+torus}
q^+(\alpha,\beta)=\begin{cases}
\alpha_{i}+1,& \beta_i=\alpha_i+1\textrm{ and } \alpha_j=\beta_j\ \forall j\ne i \\
-n-\sum_{i=1}^n \alpha_i,& \alpha_i=\beta_i\textrm{ for all } i=1,\ldots,n,\\
0,& \textrm{otherwise},
\end{cases}
\end{equation}
then the semigroup of the process is
\begin{eqnarray}
p_t^+(\alpha,\beta)&=&\left(\prod_{j=1}^n \binom{\beta_j}{\alpha_j}\right)[1-\exp{(-t)}]^{\sum_{i=1}^n(\beta_i-\alpha_i)}\exp\left[-t\left(n+\sum_{i=1}^n\alpha_i\right)\right]\nonumber \\
&=& \prod_{j=1}^n \binom{\beta_j}{\alpha_j}[1-\exp(-t)]^{\beta_j-\alpha_j}[\exp(-t)]^{1+\alpha_j},\label{pt+torus}
\end{eqnarray}
if $ \alpha_i\le\beta_i\ \forall i=1,\ldots,n$, and $p_t^+(\alpha,\beta)=0$ in all other cases. Analogously, \eqref{q-} and Theorem \ref{thm:semibirth} provide the semigroup
\begin{eqnarray}
p_t^-(\alpha,\beta)&=&\left(\prod_{j=1}^n\binom{\alpha_j}{\beta_j}\right)\exp\left(-t\sum_{i=1}^n\beta_i\right)[1-\exp{(-t)}]^{\sum_{i=1}^n(\alpha_i-\beta_i)}\nonumber\\
&=&
\prod_{j=1}^n\binom{\alpha_j}{\beta_j}[\exp(-t)]^{\beta_j}[1-\exp(-t)]^{\alpha_j-\beta_j},\label{pt-torus}
\end{eqnarray}
when $\alpha_i\ge\beta_i\ \forall i=1,\ldots,n$ (and $p_t^-(\alpha,\beta)=0$ in all other cases), of a pure death process on $\nat^n$  with death rates of the form
\begin{equation}\label{q-torus}
q^-(\alpha,\beta)=
\begin{cases}
\alpha_i,& \alpha_i=\beta_i+1 \textrm{ and } \alpha_j=\beta_j\ \forall j\ne i \\
-\sum\alpha_i,& \alpha_i=\beta_i\textrm{ for all } i=1,\ldots,n,\\
0,& \textrm{otherwise}.
\end{cases}
\end{equation}

Obviously, \eqref{pt+torus} is the semigroup of $n$ independent copies of Yule process with both the individual birth and immigration rates equal $1$, and \eqref{pt-torus} is the semigroup of $n$ independent copies of a pure death process with the death rate $1$
(see also the last paragraph of the next example for an additional interpretation).

\end{example}
A modification of Example \ref{ex:ntorus} is given by
\begin{example}\label{ex:torussym}
Let $S_n$ denote the symmetric group. Consider $K=S_n\ltimes T(n)$ acting on $V=\zesp^n$ by
\begin{equation*}
\left(\sigma,(e^{i\theta_1},\ldots,e^{i\theta_n})\right).(z_1,\ldots,z_n)=\left(e^{i\theta_1} z_{\sigma(1)},\ldots,e^{i\theta_n} z_{\sigma(n)}\right),
\end{equation*}
for $\sigma\in S_n, (\theta_1,\ldots ,\theta_n)\in\rzecz^n$.
In \cite[Section 5.3]{BensonRatcliff2009} it was proved that the irreducible subspaces for this action are
\[
P_\lambda=\textrm{span}\left\{z^{\sigma.\lambda}:\sigma\in S_n\right\},
\]
where $\lambda\in\nat^n$ and $\sigma.\lambda=(\lambda_{\sigma(1)},\dots,\lambda_{\sigma(n)})$. Hence
\[
\zesp[V]=\bigoplus_{\lambda} P_\lambda,
\]
where the sum is taken over $\lambda$ in distinct $S_n$-orbits. 
Clearly,
\begin{equation}\label{dimplambda1}
d_\lambda=\dim P_\lambda=\frac{n!}{\prod_i \lambda[i]!},
\end{equation} 
where $\lambda[i]$ is the number of $\lambda_j$ in $\lambda=(\lambda_1,\ldots,\lambda_n)$ equal $i$ (so the frequency representation of $\lambda$ \cite[page 1]{Andrews98} is 
 $\lambda=(0^{\lambda[0]}1^{\lambda[1]} 2^{\lambda[2]} 3^{\lambda[3]}\ldots)$). 

The family of $K$-invariant polynomials consists of
\[
p_\lambda=\frac{1}{d_\lambda}\sum_{\mu\in S_n.\lambda}\frac{|z^2|^\mu}{\mu!}=\frac{1}{n!}\sum_{\sigma\in S_n}\frac{|z^2|^{\sigma(\lambda)}}{\lambda!}
\]
(obviously, by $|z^2|^\mu$ we denote $z^\mu\overline{z^\mu}$). Since
\[
\int_V \sum_{\mu\in S_n.\lambda} |z^2|^\mu\exp\left(-\sum_{i=1}^n|z_i|^2\right)\textrm{d}z=\sum_{\mu\in S_n.\lambda}
\prod_{i=1}^n\left(\int_\zesp|z_i|^{2\mu_i}\exp(-|z_i|^2)\textrm{d}z_i\right),
 \]
the task of orthogonalizing the sequence $(p_\lambda)_\lambda$ is similar to the one in Example \ref{ex:ntorus}, and it yields
\begin{equation}\label{qlambda1}
q_\lambda=\frac{\prod_i \lambda[i]!}{n!}\sum_{\mu\in S_n.\lambda} \prod_{i=1}^n  L^{(0)}_{\mu_i}\left(|z_i|^2\right).
\end{equation}
The factor in front of the sum in \eqref{qlambda1} appears due to the required normalization $q_\lambda(0)=1$, and in fact equals the inverse of $\dim P_\lambda$.

It is not difficult to see that
\begin{multline*}
q_\lambda=\frac{1}{n!}\sum_{\sigma\in S_n}\prod_{i=1}^n  L^{(0)}_{\sigma(\lambda)_i}\left(|z_i|^2\right)=
\frac{1}{n!}\sum_{\sigma\in S_n}\sum_{\substack{\mu:\\ 0\le\mu_i\le\sigma(\lambda)_i, \\ i=1,\ldots,n}} \frac{(-1)^\mu}{\mu!}
\left[\prod_{i=1}^n\binom{\sigma(\lambda)_i}{\mu_i}\right]|z^2|^\mu=\\
=\frac{1}{n!}\sum_{|\mu|\le|\lambda|}\frac{(-1)^\mu}{\mu!}|z^2|^\mu
\sum_{\substack{\sigma\in S_n:\\ 0\le\mu_i\le\sigma(\lambda)_i, \\ i=1,\ldots,n}} \prod_{i=1}^n\binom{\sigma(\lambda)_i}{\mu_i}.
\end{multline*} 
In order to compute the generalized binomial coefficients associated with the action of $S_n\ltimes T(n)$ on $\zesp^n$, we will write $q_\lambda$ as a linear combination of $p_\mu$). 
Due to the decomposition of $\zesp[V]$ in this case, both sums $\sum_{|\mu|\le|\lambda|}$ below are additionally taken over $\mu$ in distinct $S_n$-orbits:
\begin{multline*}
q_\lambda=\frac{1}{n!}\sum_{|\mu|\le|\lambda|}\frac{(-1)^\mu}{\mu!}\sum_{\substack{\sigma\in S_n:\\ 0\le\mu_i\le\sigma(\lambda)_i, \\ i=1,\ldots,n}} \prod_{i=1}^n\binom{\sigma(\lambda)_i}{\mu_i}\frac{1}{\prod_j \mu[j]!}\sum_{\sigma\in S_n} |z^{\sigma(\mu)}|^2=\\
\sum_{|\mu|\le|\lambda|}(-1)^\mu\frac{1}{\prod_j \mu[j]!}\sum_{\substack{\sigma\in S_n:\\ 0\le\mu_i\le\sigma(\lambda)_i, \\ i=1,\ldots,n}} \prod_{i=1}^n\binom{\sigma(\lambda)_i}{\mu_i}p_\mu.
\end{multline*}
Therefore the generalized binomial coefficients for the action of $K=S_n\ltimes T(n)$ on $\zesp^n$ are
\begin{equation}\label{genbinsymtor}
\genbin{\lambda}{\mu}=\frac{1}{\prod_j \mu[j]!}\sum_{\substack{\sigma\in S_n:\\ 0\le\mu_i\le\sigma(\lambda)_i, \\ i=1,\ldots,n}} \prod_{i=1}^n\binom{\sigma(\lambda)_i}{\mu_i}=\frac{1}{\prod_j \mu[j]!}\sum_{\substack{\sigma\in S_n:\\ 0\le\sigma(\mu)_i\le\lambda_i, \\ i=1,\ldots,n}} \prod_{i=1}^n\binom{\lambda_i}{\sigma(\mu)_i},
\end{equation}
as
\[
\prod_{i=1}^n \binom{\sigma(\lambda)_i}{\mu_i}=\prod_{i=1}^n \binom{\lambda_i}{\sigma^{-1}(\mu)_i}.
\]
A slightly incorrect version of \eqref{genbinsymtor} was first obtained in \cite{BensonRatcliff2009}.

Guided by Section \ref{s:birth-death-Kaction}, we are now in a position to use \eqref{genbinsymtor} along with \eqref{q+} and \eqref{q-} to define some pure birth and death processes; we may and will in fact view them as processes on partitions of the length not exceeding $n$ as in every $S_n$-orbit there is exactly one such partition. Recall that a partition is a weakly decreasing sequence of non-negative integers $\lambda=(\lambda_1,\ldots,\lambda_s)$, called parts; the length of $\lambda$ is the number of non-zero parts, while $\lambda_1+\ldots+\lambda_s$ is called the weight of $\lambda$, which in our setting equals the degree of homogeneity $|\lambda|$ of the polynomials from $P_\lambda$.

As in the previous example, if $|\lambda|=|\mu|+1$ and the generalized binomial coefficient \eqref{genbinsymtor} does not vanish, then there is exactly one $i\in\{1,\ldots,n\}$ with $\lambda_i=\mu_i+1$ and for remaining $j\ne i$, $\lambda_j=\mu_j$. It is worth recalling now that a partition $\lambda=(\lambda_1,\ldots,\lambda_s)$ can be visualized by Young (or Ferrer) diagram, which is an array of $\sum_i \lambda_i$ boxes having $s$ left-justified rows with row $i$ containing $\lambda_i$ boxes for $i=1,\ldots,s$. The partitions can be identified with their Young diagrams. The above-mentioned remark can be restated in the language of Young diagrams in the following way: if \eqref{genbinsymtor} is non-zero and $|\lambda|=|\mu|+1$ then $\lambda$ is obtained from $\mu$ by appending a single box. It is not difficult to observe more generally that if $\lambda\not\supset\mu$ (in the sense of Young diagrams) then the generalized binomial coefficient \eqref{genbinsymtor} vanishes.

Let us compute the transition rates \eqref{q+} and \eqref{q-} originating from the action of $K=S_n\ltimes T(n)$. Assume first that $\beta$ arises from $\alpha$ by replacing $i$ by $i+1$, that is: 
\begin{equation}\label{itoi+1}
\beta[i]=\alpha[i]-1, \beta[i+1]=\alpha[i+1]+1, \beta[j]=\alpha[j] \textrm{ for }j\ne i,i+1.
\end{equation} 
In such case 
\[
\genbin{\beta}{\alpha}=\frac{1}{\prod_j (\alpha[j])!}(i+1)(\alpha[i+1]+1)!(\alpha[i])!\prod_{j<i}(\alpha[j])!\prod_{j>i+1}(\alpha[j])!.
\]
Combined with \eqref{dimplambda1}, it gives the following birth rates:
\begin{equation}\label{q+symtor}
q^+(\alpha,\beta)=
\begin{cases}
(i+1)\alpha[i],& \textrm{if }\eqref{itoi+1}\textrm{ holds,}\\
-n-|\alpha|,& \alpha=\beta,\\
0,& \textrm{otherwise.}
\end{cases}
\end{equation}
Theorem \ref{thm:semibirth} asserts that the semigroup $(p^+_t)_t$ of transition probabilities of the birth process with the intensities \eqref{q+symtor}
is
\[
p^+_t(\alpha,\beta)=
\begin{cases}
\frac{1}{\prod_i \beta[i]!}\sum_{\sigma\in S_n}\prod_{i=1}^n\Bigl[\binom{\beta_i}{\sigma(\alpha)_i}(1-e^{-t})^{\beta_i-\sigma(\alpha)_i}(e^{-t})^{\sigma(\alpha)_i+1}\Bigr],& |\beta|\ge|\alpha|\\
0,& \textrm{otherwise.}
\end{cases}
\]

Now, suppose that $\beta$ arises from $\alpha$ by replacing $i$ with $i-1$, that is
\begin{equation}\label{itoi-1}
\beta[i-1]=\alpha[i-1]+1, \beta[i]=\alpha[i]-1, \beta[j]=\alpha[j] \textrm{ for }j\ne i-1,i.
\end{equation}
Then
\begin{multline*}
\genbin{\alpha}{\beta}=
\frac{1}{\prod_{j<i-1}(\alpha[j])!(\alpha[i-1]+1)!(\alpha[i]-1)!\prod_{j>i}(\alpha[j])!}\\
\times i(\alpha[i])!(\alpha[i-1+1])!\prod_{j<i-1}(\alpha[j])!\prod_{j>i}(\alpha[j])!
=i\cdot\alpha[i].
\end{multline*}
Therefore, by Theorem \ref{thm:semibirth}, if transition rates of a pure death process are
\begin{equation}\label{q-symtor}
q^{-}(\alpha,\beta)=
\begin{cases}
i\cdot\alpha[i],& \textrm{if }\eqref{itoi-1}\textrm{ holds,}\\
-|\alpha|,& \alpha=\beta,\\
0,& \textrm{otherwise,}
\end{cases}
\end{equation}
then its semigroup
\[
p^-_t(\alpha,\beta)=
\begin{cases}
\frac{1}{\prod_i \beta[i]!}\sum_{\sigma\in S_n}\prod_{i=1}^n\Bigl[\binom{\beta_i}{\sigma(\alpha)_i}(1-e^{-t})^{\alpha_i-\sigma(\beta)_i}(e^{-t})^{\sigma(\beta)_i}\Bigr],& |\alpha|\ge|\beta|,\\
0,& \textrm{otherwise.}
\end{cases}
\]

When viewed as some random evolutions of Young diagrams, the birth and death processes obtained in this example gain a clear interpretation. A birth means adding a box to an existing diagram of a partition $\alpha$; \eqref{q+symtor} implies that the probability of adding a box to a row with $i$ boxes equals $((i+1)\alpha[i])/(n+|\alpha|)$, so it is proportional to the number of boxes in this row (plus $1$), as well as to the number of rows in the diagram of $\alpha$ having the same number $i$ of boxes. Since the state space is a set of Young diagrams, if a box is added to one of multiple rows of the same length, it is in fact added to the first of these rows. One needs to keep in mind that if the dynamics of the process have resulted in creating $n$ rows (with strictly positive numbers of boxes), it is impossible to create any more rows and new boxes will keep being added to the existing rows.

Analogously, from \eqref{q-symtor} it follows that the probability of a death (deleting a box from a row with $i$ boxes) equals $(i\alpha[i])/|\alpha|$, so again, it depends not only on the number of boxes but also on the number of rows of the same length. If a box is deleted from one of multiple rows of the same length, it is in fact deleted from the last of these rows.

It is now worth contrasting these processes with the ones considered in Example \ref{ex:ntorus}. The latter live on $\nat^n$ (not partitions), but we can still associate an array of boxes with every $\kappa=(\kappa_1,\ldots,\kappa_n)\in\nat^n$ so that $i$-th row in the array contains $\kappa_i$ boxes, $i=1,\ldots,n$. Under this identification, \eqref{q+torus} and \eqref{q-torus} imply that the probability of adding (or deleting) a box is only proportional to the number of boxes in a row. If there are multiple rows with the same number of boxes, each can be changed with the same probability, regardless of the position in the array.

To get yet another insight on the dynamics governed by \eqref{q+symtor}, one can view the Young diagram of a partition $\alpha$ (with the length not greater than $n$) as the diagram with exactly $n$ rows (some of the last rows will perhaps have no boxes), and then add one box to each of the $n$ rows, thus creating a ``virtual diagram'' with $n+|\alpha|$ boxes. Next, \eqref{q+symtor} implies that one picks a box uniformly at random from the virtual diagram, and adds one box to the original diagram of $\alpha$, placing it in the first row with the same number of boxes as the row from which the randomly picked box came.

A similar viewpoint can be offered for the dynamics of the death process given by \eqref{q-symtor}. In this case there is no need for the ``virtual diagram'' and one directly picks a box uniformly at random from the Young diagram of $\alpha$. Now, \eqref{q-symtor} means that one box is removed from the last row containing the same number of boxes as the row with the randomly chosen box.

\end{example}

In the next example we examine four multiplicity free actions in parallel. 

\begin{example}\label{ex:fiveactions}
Below, $M_{m,m}(\zesp)$ denotes the set of all $m\times m$ complex matrices, and $\textrm{SO}(\rzecz,m)$ stands for the group of the real orthogonal $m\times m$ matrices with determinant one. We consider:
\renewcommand{\labelenumi}{(\roman{enumi})}
\begin{enumerate}
	
	\item\label{action1}  $K=U(m)$ acting on the space $V=\symmc=\{z\in M_{m,m}(\zesp): z=z^t\}$ of $m\times m$ (with $m(m+1)/2=n$) complex symmetric matrices via
	\begin{equation*}
	k.z=k z k^{t},
	\end{equation*}
	
	\item $K=U(m)\times U(m)$ acting on $V=M_{m,m}(\zesp)$ (with $m^2=n$) via
	\begin{equation*}
	(k_1,k_2).z=k_1 z k_2^{-1},
	\end{equation*}
	
	\item $K=U(2m)$ acting on the space $V=\textrm{Skew}(m,\zesp)=\{z\in M_{2m,2m}(\zesp):z=-z^t\}$ of complex skew-symmetric $m\times m$ matrices (with $m(2m-1)=n$) by the formula from (i),
	
	\item $K=\textrm{SO}(\rzecz,n)\times T(1)$ acting on $V=\zesp^n$ via
	\begin{equation*}
	(k_1,k_2).z=k_2 k_1 z,\quad k_1\in\textrm{SO}(\rzecz,n), k_2\in\zesp, |k_2|=1, z\in\zesp^n
	\end{equation*}

\end{enumerate}	

Those actions have a common origin - they all arise in connection with Hermitian symmetric spaces and Jordan algebras. We will now very briefly sketch the link with Jordan algebras (all necessary definitions can be found in the monograph \cite{farautkoranyi} by Faraut and Kor\'{a}nyi, or, in an abbreviated version, in \cite{matysiakswieca2}).

If $W$ is a simple complex Jordan algebra, then 
$J$ defined as the identity component of $\textrm{Str}(W)\cap U(W)$, where $U(W)$ stands for the unitary group of $W$, and $\textrm{Str}(W)$ for the structure group of $W$, acts multiplicity free on $W$.  Up to isomorphism, there are five families of simple complex
Jordan algebras. Four of them coincide with the four spaces $V$ listed above, and the groups $K$ giving the actions (i)--(iv) coincide with the groups $J$ acting multiplicity free on $W$ (see the first three columns of Table \ref{tab:1}). 

\begin{table}[h]
	\begin{tabular}{cccccc}
		\hline
		Action &	$W$ & $J$ & $r$ &  $d$ & $\theta$ \\
		\hline
		(i) &		$\symmc$ &  $U(m)$ & m & 1 & $1/2$\\
		(ii) &		$M(m,\zesp)$ & $U(m)\times U(m)$ & m & 2 & $1$\\
		(iii) &		$\textrm{Skew}(2m,\zesp)$  & $U(2m)$ & m & 4 & $2$\\
		(iv)&		$\zesp^n$ &  $SO(n)\times\mathbb{T}$ & 2 & $n-2$ & $(n-2)/2$\\
		\hline
	\end{tabular}
	\caption{}
	\label{tab:1}
\end{table}

The set $\Lambda$ that parametrizes the decomposition \eqref{decompos} of $\zesp[W]$  turns out to be the set of all partitions
of length not greater than the rank of the Jordan algebra $W$. This means that the ranks of the multiplicity free actions (i)--(iv) are equal to the ranks of the corresponding Jordan algebras (given in the fourth column of Table \ref{tab:1}). Furthermore, the dimensions of the $K$-irreducible subspaces $P_\lambda$ were computed by Upmeier \cite[Lemma 2.6]{Upmeier1983} (see also \cite[p. 315]{farautkoranyi}): 
\begin{multline}\label{upmeier}
d_\lambda=\prod_{1\le p<q\le r}\frac{\lambda_p-\lambda_q+\frac{d}{2}(q-p)}{\frac{d}{2}(q-p)}
\frac{B\left(\lambda_p-\lambda_q,\frac{d}{2}(q-p-1)+1\right)}{B\left(\lambda_p-\lambda_q,\frac{d}{2}(q-p+1)\right)}=\\
\prod_{p=1}^r \frac{\Gamma\left(\frac{d}{2}\right)}{\Gamma\left(\frac{d}{2} p\right)\Gamma\left(\frac{d}{2}(p-1)+1\right)}\\
\times 
\prod_{1\le p<q\le r}\left(\lambda_p-\lambda_q+\frac{d}{2}(q-p)\right)
\frac{\Gamma\left(\lambda_p-\lambda_q+\frac{d}{2}(q-p+1)\right)}{\Gamma\left(\lambda_p-\lambda_q+\frac{d}{2}(q-p-1)+1\right)}.
\end{multline}
The parameter $d$ appearing in \eqref{upmeier} denotes the Peirce constant of the Jordan algebra (to be found in the fifth column of Table \ref{tab:1}).

An important role in the theory of Jordan algebras is played by the spherical polynomials $\sph{\lambda}$, which are certain homogeneous polynomials of degree $|\lambda|$. The spherical polynomials corresponding to (ii) are, up to a normalizing constant, Schur functions (see e.g. \cite{macdonaldbook}); the (zonal) spherical polynomials associated with  (i) play an important role in multivariate statistical theory (see \cite{muirheadbook}). In the case of (iv), the spherical polynomials can be expressed in terms of the ultraspherical polynomials. For any 
simple Euclidean Jordan algebra, $\sph{\lambda}(x)$ regarded as (symmetric) functions of the eigenvalues of $x$, are some special cases of Jack polynomials $J_\lambda(\cdot;\theta)$. Recall that Jack polynomials $J_\lambda(z_1,\ldots,z_N;\theta)$, indexed by partitions (Young diagrams) $\lambda$ of the length $N$ and a positive parameter $\theta$, can be defined as the unique symmetric polynomial eigenfunctions of a certain differential operator with, the leading term being the monomial symmetric function $z^\lambda$. For the relevant properties of these polynomials we refer to Macdonald's book \cite{macdonaldbook}, with a remark that Macdonald uses a parameter $\alpha$ given by our $\theta^{-1}$. The special values of $\theta$ for which Jack polynomials are the spherical polynomials in a Jordan algebra are linked with the algebra's Peirce constant $d$ by $\theta=d/2$ (see the last column of Table \ref{tab:1}).

In the theory of Jordan algebras certain generalized binomial coefficients were introduced  as the coefficients of the expansion of spherical polynomial $\sph{\lambda}$ (evaluated at a point shifted by the identity element) into spherical polynomials of degrees not higher than $|\lambda|$ (see p. 343 in \cite{farautkoranyi}). (It is worth noting that the generalized binomial coefficients for $\symmc$ were studied independently by Bingham \cite{bingham1974}.)

Lassalle \cite[p. 254]{Lassalle1990} defined another version of generalized binomial coefficients in completely analogous fashion, using Jack polynomials $J_\lambda(\cdot;\theta)$ instead of $\sph{\lambda}$ (see also \cite{Kaneko1993} and \cite{OkounkovOlshanski1997}). Due to compatibility of the definitions, Lassalle's coefficients agree with the ones from corresponding Jordan algebras if $\theta$ takes values listed in the last column of Table \ref{tab:1}. Since Yan proved in \cite{yan_unpub} that the generalized binomial coefficients from Jordan algebras agree with the ones originating from appropriate actions (i)--(iv), we conclude that Lassalle's coefficients coincide with their counterparts for actions (i)-(iv).

Moreover, Lassalle (\cite[p. 320]{Lassalle1998}, see also \cite[Th\'{e}or\`{e}me 5]{Lassalle1990}) found the following explicit formula, which, in view of the above remarks, holds true for the generalized binomial coefficients associated with the multiplicity actions considered in this example, provided $\theta$ takes relevant value: if $\lambda$ and $\mu$ are partitions of the length not greater than $r$, and there exists $i\in\{1,\ldots,r\}$ such that $\mu_i=\lambda_i-1$ and $\mu_j=\lambda_j$ for $j\ne i$, then
\begin{equation}\label{lassalle}
{\genbin{\lambda}{\mu}}=[\lambda_i+\theta(r-i)]\prod_{\substack{j=1\\j \ne i}}^r
\frac{\lambda_i-\lambda_j+\theta(j-i-1)}{\lambda_i-\lambda_j+\theta(j-i)}.
\end{equation}

Okounkov and Olshanski \cite{OkounkovOlshanski1997} showed, using a result of Knop and Sahi \cite{KnopSahi1996}, that the generalized binomial coefficients $\genbin{\lambda}{\mu}$ associated with Jack polynomials (for all values of the parameter $\theta$) vanish unless $\mu\subset\lambda$. This fact, together with formulas \eqref{lassalle} and \eqref{upmeier}, allow us to explicitly compute the transition rates \eqref{q+} and \eqref{q-} for the pure birth and pure death processes associated with the actions (i)-(iv). 

At least two cases deserve special mention. The first one deals with action (ii), and it is the value of $\theta=1$ what accounts for the simpler formulas in this case compared to the remaining actions. One quickly gets the birth rates
\begin{equation*}
q^{+}(\alpha,\beta)=\begin{cases}
(\alpha_i+m-i+1)\prod_{j\ne i}\frac{\alpha_i-\alpha_j+j-i+1}{\alpha_i-\alpha_j+j-i},& \beta_i=\alpha_i+1 \textrm{ and } \beta_j=\alpha_j\ \forall j\ne i,\\
-m^2-\sum_j \alpha_j,& \beta=\alpha,\\
0,& \textrm{otherwise,}
\end{cases}
\end{equation*}
and the death rates
\begin{equation*}
q^{-}(\alpha,\beta)=
\begin{cases}
(\alpha_i+m-i)\prod_{j\ne i}\frac{\alpha_i-\alpha_j+j-i-1}{\alpha_i-\alpha_j+j-i}, & \beta_i=\alpha_i-1 \textrm{ and } \beta_j=\alpha_j\ \forall j\ne i,\\
-\sum_j \alpha_j,& \beta=\alpha,\\
0, & \textrm{otherwise.}
\end{cases}
\end{equation*}
Observe that similar, although not identical formulas appear in \cite{BorodinOlshanski2012} (see also \cite{Olshanski2015}) where they provide the jump rates for some birth and death processes on the dual object of the unitary group.

The second case worth displaying is the one for action (iv). Here the low rank $r=2$ of the action is the simplifying factor. It is straightforward to verify the formulas for the generalized binomial coefficients
\begin{equation*}
\genbin{(\alpha_1,\alpha_2)}{(\alpha_1-1,\alpha_2)}=\frac{(\alpha_1-\alpha_2)(\alpha_1+\theta)}{\alpha_1-\alpha_2+\theta},\quad
\genbin{(\alpha_1,\alpha_2)}{(\alpha_1,\alpha_2-1)}=\frac{\alpha_2(\alpha_1-\alpha_2+2\theta)}{\alpha_1-\alpha_2+\theta},
\end{equation*}
the birth rates
\begin{equation*}
q^+(\alpha,\beta)=
\begin{cases}
\frac{(\alpha_1+1+\theta)(\alpha_1-\alpha_2+2\theta)}{\alpha_1-\alpha_2+\theta},& \beta=(\alpha_1+1,\alpha_2),\\
\frac{(\alpha_2+1)(\alpha_1-\alpha_2)}{\alpha_1-\alpha_2+\theta},& \beta=(\alpha_1,\alpha_2+1),\\
-n-\alpha_1-\alpha_2,& \beta=\alpha,\\
0,& \textrm{otherwise}
\end{cases}
\end{equation*}
and the death rates
\begin{equation*}
q^-(\alpha,\beta)=
\begin{cases}
\frac{(\alpha_1+\theta)(\alpha_1-\alpha_2)}{\alpha_1-\alpha_2+\theta},& \beta=(\alpha_1-1,\alpha_2),\\
\frac{\alpha_2(\alpha_1-\alpha_2+2\theta)}{\alpha_1-\alpha_2+\theta},& \beta=(\alpha_1,\alpha_2-1),\\
-\alpha_1-\alpha_2,& \beta=\alpha,\\
0,& \textrm{otherwise.}
\end{cases}
\end{equation*}
(Recall that here $\theta=(n-2)/2$.) See Remark \ref{rem:Biane} in Section \ref{s:heis} for a link of these intensities with Biane's work \cite{biane1998}.
\end{example}

\begin{remark}\label{rem:Jordan_examples}
As it was mentioned in Example \ref{ex:fiveactions}, the simple complex Jordan algebras were classified into five cases. Four (\emph{classical}) cases are presented in Table \ref{tab:1}. The fifth one is said to be \emph{exceptional} and is in fact the exceptional $27$-dimensional Albert algebra which can roughly be considered as a space $\textrm{Herm}(3,\mathbb{O})$ of $3\times 3$ Hermitian matrices on octonions. Although it follows from the general theory laid out in \cite{farautkoranyi} that there exists a multiplicity free action associated with the Albert algebra (involving one of the five exceptional Lie algebras from the Cartan--Killing classification of the complex simple Lie algebras), we excluded this case from our considerations due to its undoubtedly exceptional character.
\end{remark}

\begin{remark}
The decomposition for $\zesp[V]$ originating from action (iv) is given by the classical theory of spherical harmonics (see \cite{SteinWeiss1971}). The generalized binomial coefficients in this case were obtained (without appealing to Jack polynomials) by Ding \cite{Ding2003}.
\end{remark}

\section{Heisenberg group and quantum Ornstein-Uhlenbeck process}\label{s:heis}

Now we will explain a connection between the birth and death processes defined in Section \ref{s:birth-death-Kaction}, and the quantum Ornstein-Uhlenbeck processes introduced by Biane (\cite{biane1996} and \cite{biane1998}).

\subsection{Gelfand pairs associated with the Heisenberg group}\label{ss:gelfheis}

We identify the Heisenberg group $H$ with $V\times \rzecz$ with multiplication given by
\[
(z,w)(z^\prime,w^\prime)=(z+z^\prime,w+w^\prime+\MojeIm\iloskal{z^\prime}{z})
\]
for $z,z^\prime\in V$ and $w,w^\prime\in\rzecz$. The Heisenberg group is a non-abelian locally compact group, with the well-known representations. The irreducible, infinite dimensional unitary representations of $H$ can be realized on the Segal--Bargmann space $\mathcal{F}_a(V)$ ($a\in\rzecz\setminus\{0\}$) of holomorphic functions $\psi$ on $V$ such that 
\[
\iloskal{\psi}{\psi}_{|a|}<\infty
\]
(recall \eqref{fockinner}). The Bargmann-Fock representation $\pi_1$ of $H$ on $\mathcal{F}_1$ is given by
\begin{equation}\label{bargmannfock}
\left(\pi_1(z,w)\psi\right)(\zeta)=\exp\left(iw-\frac{\norma{}{z}^2}{2}-\iloskal{\zeta}{z}\right)\psi(\zeta+z).
\end{equation}
Using the scaling automorphisms (dilatations) $\delta_a$ of the Heisenberg group
\begin{equation}
\delta_{a}(z,w)=\begin{cases}
(az,a^2w),\ a>0,\\
(a\bar{z},-a^2w),\ a<0,
\end{cases}
\end{equation}
one can neatly define representations $\pi_a=\pi_1\circ\delta_a$ on $\mathcal{F}_a$ for any $a\in\rzecz\setminus\{0\}$. The Stone--von Neumann theorem implies that the Bargmann-Fock representations $\pi_a$ with $a\ne0$, along with some one dimensional representations, essentially (up to a unitary equivalence) exhaust the unitary dual of $H$.

The natural action of $U(V)$ on $V$ gives rise to a compact subgroup of the group $\Aut(H)$ of automorphisms of $H$, again denoted $U(V)$, via 
\begin{equation}\label{KActsOnH}
k.(z,w)=(k.z,w),\ k\in U(V), (z,w)\in H,
\end{equation}
therefore it makes sense to consider $K$-invariant functions on $H$ for any given compact subgroup of $U(V)$. The $K$-invariant $L^1$-functions on $H$ make a convolution subalgebra $L^1(H)^K$ of $L^1(H)$, and one says that $(K,H)$ is a \emph{Gelfand pair} if $L^1(H)^K$ is commutative. The following theorem, which is a special case of a result proved by Carcano in \cite{carcano}, provides the fundamental link between the Gelfand pairs built on Heisenberg groups and multiplicity free actions.

\begin{theorem}
The pair $(K,H)$ is a Gelfand pair if and only if the action of $K$ on $V$ is multiplicity free.
\end{theorem}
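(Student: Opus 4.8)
The plan is to establish the equivalence by relating the commutativity of the convolution algebra $L^1(H)^K$ directly to the decomposition \eqref{decompos} of $\zesp[V]$ into $K$-irreducibles, passing through the Bargmann--Fock representation $\pi_1$ defined in \eqref{bargmannfock}. The key observation is that the convolution algebra of $K$-invariant functions on $H$ is commutative precisely when the representations of $K$ appearing in the relevant decomposition are multiplicity free, which is a general principle connecting Gelfand pairs to multiplicity-free decompositions. First I would record the standard fact that $(K,H)$ is a Gelfand pair if and only if, for every irreducible unitary representation $\pi$ of $H$, the representation of $K$ on the representation space (obtained by restricting and using the $K$-action on $H$ through automorphisms) decomposes without multiplicity; this is the representation-theoretic reformulation of commutativity of $L^1(H)^K$.

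The heart of the argument is then to understand how $K$ acts on the Segal--Bargmann space $\mathcal{F}_1(V)$ carrying $\pi_1$. Because $K$ acts on $H$ by the automorphisms \eqref{KActsOnH} fixing the center, each $k\in K$ intertwines $\pi_1$ with itself up to the unitary operator $W(k)$ on $\mathcal{F}_1(V)$ given by $(W(k)\psi)(\zeta)=\psi(k^{-1}.\zeta)$, i.e.\ exactly the representation \eqref{polyrep} of $K$ on holomorphic functions, now realized on $\mathcal{F}_1(V)$ rather than on $\zesp[V]$. Since the holomorphic polynomials $\zesp[V]$ are dense in $\mathcal{F}_1(V)$ and the decomposition \eqref{decompos} into the $K$-irreducible subspaces $P_\lambda$ is by finite-dimensional pieces, the unitary representation $W$ of $K$ on $\mathcal{F}_1(V)$ decomposes as $\widehat{\bigoplus}_{\lambda\in\Lambda} P_\lambda$. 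Thus the multiplicity-free property of the $K$-action on $V$ (no $P_\alpha\cong P_\beta$ for $\alpha\ne\beta$, by the definition given in Section \ref{s:multi}) is \emph{literally the same} as the statement that $W$ is a multiplicity-free representation of $K$ on the infinite-dimensional representation space $\mathcal{F}_1(V)$.

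It then remains to propagate this equivalence through the reduction in the first step. For the infinite-dimensional representations $\pi_a$ with $a\ne 0$, the dilatation $\delta_a$ conjugates $\pi_a$ into $\pi_1$ in a $K$-equivariant way, so the multiplicity-free property of $W$ on $\mathcal{F}_a(V)$ is independent of $a$ and reduces to the case $a=1$ just analyzed. For the remaining one-dimensional characters of $H$ that arise in the Stone--von Neumann description of the dual, the associated $K$-decomposition is elementary and imposes no extra obstruction. Putting these together, $(K,H)$ is a Gelfand pair if and only if $W$ acts multiplicity freely, which holds if and only if the $K$-action on $V$ is multiplicity free, as claimed.

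I expect the main obstacle to be the careful justification of the first step: rigorously reducing commutativity of $L^1(H)^K$ to a multiplicity statement over the full unitary dual of $H$, and in particular handling the one-dimensional part of the dual and the role of the center so that no spurious multiplicities are introduced. By contrast, once that reduction is in place, the identification of $W$ with the polynomial representation \eqref{polyrep} and hence with the decomposition \eqref{decompos} is essentially formal. Since the statement is attributed as a special case of Carcano's result in \cite{carcano}, I would ultimately lean on that reference for the delicate functional-analytic details, using the plan above to explain \emph{why} the equivalence holds rather than to reprove the general theorem from scratch.
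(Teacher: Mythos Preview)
The paper does not prove this theorem at all: it merely states it and attributes it as a special case of Carcano's result in \cite{carcano}. Your proposal therefore goes well beyond the paper, supplying an outline of the argument that the paper simply cites.

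That said, your sketch follows the standard route behind Carcano's theorem and is essentially correct. The reduction of commutativity of $L^1(H)^K$ to a multiplicity statement over the unitary dual of $H$ is the Mackey-type step for the semidirect product $K\ltimes H$; the identification of the intertwining $K$-representation on $\mathcal{F}_1(V)$ with the polynomial representation \eqref{polyrep} (hence with the decomposition \eqref{decompos}) is exactly right, as is the use of dilations to reduce all $\pi_a$ to $\pi_1$; and the one-dimensional characters do not obstruct since their representation spaces are one-dimensional. Your own assessment that the first step is where the real analysis lies, and that one ultimately leans on \cite{carcano} for it, matches precisely how the paper treats the matter.
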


The significance of the Gelfand pairs in the context of this paper comes from the commutativity of the $\textrm{C}^\ast$-completion $\chk$ of the $*$-algebra $L^1(H)^K$. By the Gelfand-Naimark theorem, commutativity of $\chk$ implies 
isomorphism
\[
\chk\cong C_0 \left(\sigma\left(\chk\right)\right),
\]
where $\sigma\left(\chk\right)$ is the spectrum (the Gelfand space) of $\chk$. The Gelfand space $\sigma\left(\chk\right)$ can be identified (via integration) with the set $\Sigma(H)^K$ of all \emph{bounded $K$-spherical functions on $H$}, which can be defined in several ways, e.g. as the smooth $K$-invariant functions on $H$, taking value $1$ at $(0,0)$, being joint eigenfunctions for the differential operators on $H$ that are invariant under $K$-action and under the left action of $H$. 

In \cite{BJR92}, the bounded $K$-spherical functions were determined for all compact Lie subgroups of $U(V)$ for which $(K,H)$ is a Gelfand pair. Interestingly, there is a tight connection between the polynomials $\{q_\lambda\}$ for a multiplicity free action of $K\subset U(V)$ defined in subsection \ref{ss:polynomials}, and the elements of $\Sigma(H)^K$, where $H=V\times\rzecz$ is the Heisenberg group built on $V$. Namely, Benson, Jenkins and Ratcliff show in \cite{BJR92} that the functions $\varphi_{s,\lambda}$ (\emph{the spherical functions of the first kind} in the terminology of \cite{BJR92}), defined for $s\in\rzecz^\ast$ and $\lambda\in \Lambda$ by
\begin{equation}\label{spherfun}
\varphi_{s,\lambda}(z,w)=\exp\left[isw-\frac{|s|\norma{}{z}^2}{2}\right]{q}_\lambda\left(\sqrt{|s|}z\right), \quad (z,w)\in H,
\end{equation}
are bounded $K$-spherical functions on $H$ (in fact, \eqref{spherfun} is a rescaled version of the analogous formula from \cite{BJR92} as in \cite{BJR92} a slightly different definition of the group law in $H$ was considered; see Remark \ref{rem:conventions}). There exist some elements of $\Sigma(H)^K$ (\emph{the spherical functions of the second kind} in terminology of \cite{BJR92}) different from the ones defined by \eqref{spherfun} but they will not play a role in this paper.

\subsection{Non-commutative Brownian motion}\label{ss:quantBM}

It is well known that if $G$ is a locally compact group and $\pi$ is a unitary representation of $G$ on a Hilbert space $H_\pi$ then for any $v\in H_\pi$, the function $\phi:G\to\zesp$ defined as $\phi(g)=\iloskal{\pi(g)v}{v}_{H_\pi}$ is of positive type (in fact, any nonzero function of positive type on $G$ arises from a unitary representation in this way). In the context of the Heisenberg group, one can show using the Bargmann-Fock representation \eqref{bargmannfock} that for any positive $t$,
\[
(z,w)\mapsto \exp\left[-t\psi(z,w)\right]
\]
is of positive type provided that the mapping $\psi$ from $H$ to $\zesp$ is given by
\begin{equation}\label{psi}
\psi(z,w)=-iw+\frac{\norma{}{z}^2}{2}.
\end{equation}

Biane \cite{biane1996} used this fact and considered functions $Q_t:L^1(H)\to L^1(H)$, given for $t\ge0$ by the formula
\begin{equation}\label{Qt}
(Q_t f)(z,w)=\exp\left[-t\psi(z,w)\right]f(z,w)
\end{equation}
(the product above is the pointwise multiplication of functions on $H$). Biane (see \cite[1.2.1 Proposition]{biane1996}) proved that \eqref{Qt} defines the semigroup $(Q_t)_t$ of completely positive contractions on $L^1(H)$ that extends in a unique way to the semigroup of completely positive contractions on the group $C^\ast$-algebra $C^\ast(H)$ of the Heisenberg group. For the extension we will use the same notation $(Q_t)_t$ as for the original semigroup on $L^1(H)$. As it is explained in \cite[Chapter 5]{biane2008}, $(Q_t)_t$ is the semigroup of a non-commutative Brownian motion on the unitary dual of $H$.

\subsection{Quantum Ornstein-Uhlenbeck processes}\label{ss:ornstuhlen}

Recall that if $(B_t)_t$ is classical Brownian motion starting from zero, then 
\[
(\exp(t/2)B_{\exp(-t)})_t\quad\textrm{and}\quad(\exp(-t/2)B_{\exp(t)})_t
\]
are Ornstein-Uhlenbeck processes. For the semigroup $(Q_t)_t$, one can analogously put
\begin{equation*}
\widetilde{R}_t^+=\delta_{\exp(t/2)}\circ Q_{1-\exp(-t)},\quad \widetilde{R}_t^-=\delta_{\exp(-t/2)}\circ Q_{\exp(t)-1}
\end{equation*}
(here $\delta_a$ stands for an operator on $L^1(H)$ induced by the dilation $\delta_a:H\to H$ via $L^1(H)\ni f\mapsto f\circ\delta_a\in L^1(H)$). It is not difficult to check that $(\widetilde{R}^\pm_t)_t$ are semigroups of completely positive contractions of $\cgw(H)$ (see \cite[p. 92]{biane1996}).

Every unitary representation $\pi_t:H\to U\left(\mathcal{F}_1\right)$ of the Heisenberg group determines the $C^\ast$-algebras mapping $\pi_t:\cgw(H)\to\mathcal{B}(\mathcal{F}_1)$ (with $\mathcal{B}$ denoting the algebra of bounded operators). In fact (see \cite{Lee1977}), 
\[
\pi_t:\cgw(H)\to\mathcal{K}(\mathcal{F}_1),
\]
where $\mathcal{K}$ stands for the algebra of compact operators. The following Proposition \ref{prop:biane312}, along with the above remarks, motivated Biane to name the semigroups $(R_t^\pm)_t$ appearing in the proposition, \emph{the quantum Ornstein-Uhlenbeck semigroups}.

\begin{proposition}[\cite{biane1996}, Prop. 3.1.2, \cite{biane1998}, Lemma 2.6]\label{prop:biane312}
	There exist semigroups $(R^\pm_t)_t$ of completely positive contractions on $\mathcal{K}\left(\mathcal{F}_1\right)$ such that
	\begin{eqnarray*}
		\pi_1\circ \widetilde{R}_t^+&=&R_t^+\circ\pi_1,\\
		\pi_{-1}\circ \widetilde{R}_t^-&=&R_t^-\circ\pi_{-1}.
	\end{eqnarray*}  
\end{proposition}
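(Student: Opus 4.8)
The plan is to exploit the fact, recalled in the excerpt via \cite{Lee1977}, that the Bargmann-Fock representation yields a \emph{surjective} $*$-homomorphism $\pi_1\colon\cgw(H)\to\mathcal{K}(\mathcal{F}_1)$, so that $\mathcal{K}(\mathcal{F}_1)\cong\cgw(H)/\ker\pi_1$. If one shows that $(\widetilde R^+_t)_t$ leaves the closed two-sided ideal $\ker\pi_1$ invariant, then there is a unique induced map $R^+_t$ on the quotient satisfying $R^+_t\circ\pi_1=\pi_1\circ\widetilde R^+_t$, and likewise an $R^-_t$ from the invariance of $\ker\pi_{-1}$ under $(\widetilde R^-_t)_t$. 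This reduces the entire statement to a single invariance property of the kernels.

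To describe the kernels concretely I would first observe that the centre of $H$ acts under $\pi_1$ through the central character $(0,w)\mapsto e^{iw}$, so that for $f\in L^1(H)$ one has $\pi_1(f)=\int_V \hat f(z,1)\,\pi_1(z,0)\,\textrm{d}z$, where $\hat f(z,a)=\int_{\rzecz}f(z,w)e^{iaw}\,\textrm{d}w$ denotes the Fourier transform in the central variable. Because the Weyl transform $g\mapsto\int_V g(z)\,\pi_1(z,0)\,\textrm{d}z$ is injective, $\pi_1(f)=0$ holds precisely when the ``central slice'' $\hat f(\cdot,1)$ vanishes; thus $\ker\pi_1\cap L^1(H)=\{f:\hat f(\cdot,1)=0\}$, with the analogous description of $\ker\pi_{-1}$ at the slice $a=-1$.

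The crux is to track how the slice $\hat{(\cdot)}(\cdot,1)$ transforms under $\widetilde R^+_t=\delta_{\exp(t/2)}\circ Q_{1-\exp(-t)}$. From $Q_sf(z,w)=\exp[siw-\tfrac{s}{2}\norma{}{z}^2]f(z,w)$ one gets $\widehat{Q_s f}(z,a)=\exp[-\tfrac{s}{2}\norma{}{z}^2]\,\hat f(z,a+s)$, while $\delta_{a_0}f(z,w)=f(a_0z,a_0^2w)$ gives $\widehat{\delta_{a_0}f}(z,b)=a_0^{-2}\hat f(a_0z,b/a_0^2)$ for $a_0>0$. Substituting $a_0=\exp(t/2)$ and $s=1-\exp(-t)$, the shifted central frequency returns exactly to $1$ (since $1/a_0^2+s=e^{-t}+1-e^{-t}=1$), yielding
\[
\widehat{\widetilde R^+_t f}(z,1)=e^{-t}\,\exp\!\left[-\tfrac{1}{2}\left(e^{t}-1\right)\norma{}{z}^2\right]\hat f\!\left(e^{t/2}z,1\right).
\]
The decisive point is that the right-hand side depends on $f$ only through $\hat f(\cdot,1)$; hence $\hat f(\cdot,1)=0$ forces $\widehat{\widetilde R^+_t f}(\cdot,1)=0$, i.e. $\widetilde R^+_t(\ker\pi_1)\subseteq\ker\pi_1$. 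The same bookkeeping with the slice $a=-1$, $a_0=\exp(-t/2)$ and $s=\exp(t)-1$ returns the frequency to $-1$ and gives $\widetilde R^-_t(\ker\pi_{-1})\subseteq\ker\pi_{-1}$. (One passes from the dense subalgebra $L^1(H)$ to all of $\cgw(H)$ using continuity of $\widetilde R^\pm_t$ and $\pi_{\pm1}$ together with closedness of the kernels.)

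It then remains to assemble the conclusion. The induced maps $R^\pm_t$ are completely positive contractions, being factorizations through the surjective $*$-homomorphism $\pi_{\pm1}$ of the completely positive contractions $\widetilde R^\pm_t$, and both complete positivity and contractivity descend to a quotient by an invariant closed ideal. The semigroup law $R^\pm_sR^\pm_t=R^\pm_{s+t}$ and $R^\pm_0=\mathrm{id}$ follow from the corresponding properties of $(\widetilde R^\pm_t)_t$ and surjectivity of $\pi_{\pm1}$, since $R^\pm_sR^\pm_t\circ\pi_{\pm1}=\pi_{\pm1}\circ\widetilde R^\pm_s\widetilde R^\pm_t=\pi_{\pm1}\circ\widetilde R^\pm_{s+t}=R^\pm_{s+t}\circ\pi_{\pm1}$. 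I expect the main obstacle to be the kernel invariance: it rests entirely on the central-frequency computation above and on correctly matching the conventions (the Jacobians of the dilations, the central character, and the normalization of the Weyl/Bargmann-Fock transform), so that the frequency shift produced by $Q$ and the rescaling produced by $\delta$ cancel exactly at the fibre $a=\pm1$.
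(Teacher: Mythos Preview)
Your argument is correct. The paper itself does not supply a proof of this proposition: it is quoted verbatim from Biane's work (\cite{biane1996}, Prop.~3.1.2, and \cite{biane1998}, Lemma~2.6) and used as a black box, so there is no in-paper proof to compare against.

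That said, your approach is precisely the natural one and matches the spirit of Biane's original argument. The essential mechanism is that the parameters in $\widetilde R^\pm_t=\delta_{a_0}\circ Q_s$ are chosen so that the shift in the central Fourier variable caused by $Q_s$ is exactly undone by the rescaling from $\delta_{a_0}$ at the fibre $a=\pm1$; you have verified this cancellation explicitly and correctly (including the Jacobian $a_0^{-2}$ and the identity $e^{-t}+(1-e^{-t})=1$, respectively $-e^{t}+(e^{t}-1)=-1$). The remaining steps---surjectivity of $\pi_{\pm1}$ onto $\mathcal{K}(\mathcal{F}_1)$ from \cite{Lee1977}, injectivity of the Weyl transform, descent of complete positivity and contractivity to the quotient by an invariant closed ideal, and passage from $L^1(H)$ to $\cgw(H)$ by density and continuity---are all standard and correctly invoked. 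One small point worth making explicit is that for $a<0$ the dilation is $\delta_a(z,w)=(a\bar z,-a^2w)$, so the central character of $\pi_{-1}=\pi_1\circ\delta_{-1}$ is indeed $(0,w)\mapsto e^{-iw}$, confirming that the relevant slice for $\ker\pi_{-1}$ is $\hat f(\cdot,-1)$ as you use.
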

It is known (see e.g. \cite[p. 62]{biane1998}) that the image $\pi_{\pm1}\left(\chk\right)\subset\mathcal{K}(\mathcal{F}_1)$ is the commutative 
$\cgw$-algebra generated by the orthogonal projections onto the subspaces $P_\lambda$, $\lambda\in\Lambda$, appearing in \eqref{decompos}, and that the spectrum of the algebra $\pi_{\pm1}\left(\chk\right)$ is homeomorphic to $\Lambda$ (with discrete topology). Since from Proposition \ref{prop:biane312} it follows that $\pi_{\pm1}\left(\chk\right)$ is $R_t^\pm$-invariant sub-$\cgw$-algebra of $\mathcal{K}(\mathcal{F}_1)$, we deduce that restricting $(R^\pm_t)_t$ to $\pi_{\pm1}\left(\chk\right)$ one gets a classical Markov process on the countable state space $\Lambda$.

\subsection{The connection}\label{ss:connection}

Let $(p_t^\pm)_t$ be the semigroups of transition probabilities on $\Lambda$ originating from the restriction of $(R^\pm_t)_t$, respectively. We purposely use here the same notation $(p_t^\pm)_t$ as in Section \ref{s:birth-death-Kaction} for the semigroups of the birth and death processes defined with the aid of the generalized binomial coefficients. The reason for doing so is revealed by the following theorem.

\begin{theorem}\label{thm:connection}
The restrictions of the semigroups $(R_t^\pm)_t$ to $\pi_{\pm1}\left(\chk\right)$ yield the semigroups of transition probabilities on $\Lambda$ given by \eqref{ptplus} and \eqref{ptminus}.
\end{theorem}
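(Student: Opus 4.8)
The plan is to compute the action of the quantum Ornstein–Uhlenbeck semigroups $(R_t^\pm)_t$ directly on the orthogonal projections $e_\lambda$ onto the subspaces $P_\lambda$, since these projections span the commutative algebra $\pi_{\pm1}(\chk)$ and the transition probabilities are recovered from the matrix coefficients $p_t^\pm(\alpha,\beta)=$ the $\beta$-component of $R_t^\pm(e_\alpha)$. First I would transport the computation back to $\cgw(H)$ via the intertwining relations in Proposition~\ref{prop:biane312}, so that it suffices to understand $\widetilde R_t^\pm=\delta_{\exp(\pm t/2)}\circ Q_{\mp(1-\exp(\mp t))}$ on the $K$-invariant elements of $\chk$. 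The key is that, under the Gelfand-pair identification, $\chk\cong C_0(\Sigma(H)^K)$ and the relevant spherical functions are exactly the $\varphi_{s,\lambda}$ from \eqref{spherfun}, built from the polynomials $q_\lambda$; thus the operators $Q_t$ (pointwise multiplication by $\exp[-t\psi]$ with $\psi$ from \eqref{psi}) and the dilations $\delta_a$ act on the spherical functions in a way that can be read off explicitly.

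Next I would carry out the explicit computation on a single spherical function $\varphi_{s,\lambda}$ at the representation parameter $s=\pm1$. Applying the pointwise multiplier $\exp[-u\psi(z,w)]=\exp[iuw-u\norma{}{z}^2/2]$ to $\varphi_{\pm1,\lambda}$ rescales the Gaussian factor, so $Q_u\varphi_{\pm1,\lambda}$ becomes a spherical function at a shifted parameter $s'=s'(u)$ times a $q_\lambda$ evaluated at a rescaled argument; the dilation $\delta_{\exp(\pm t/2)}$ then rescales the argument once more. The whole composition $\widetilde R_t^\pm$ therefore sends $q_\lambda(z)\mapsto q_\lambda(c\,z)$ for an explicit constant $c=c(t)$ (with $c=\sqrt{e^{-t}}$ in the birth case and $c=\sqrt{e^{t}}$ adjusted appropriately in the death case), up to an overall exponential prefactor in $t$ and $|\lambda|$. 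The heart of the matter is then to expand $q_\lambda(cz)$ back into the basis $\{q_\mu\}$: using $q_\lambda=\sum_{|\alpha|\le|\lambda|}(-1)^{|\alpha|}\genbin{\lambda}{\alpha}p_\alpha$ together with the homogeneity of $p_\alpha$ (degree $2|\alpha|$, so $p_\alpha(cz)=c^{2|\alpha|}p_\alpha(z)$) and then re-inverting back to the $\{q_\mu\}$ basis, one produces coefficients that are precisely products of generalized binomial coefficients and powers of $1-c^2$ and $c^2$. Matching these against \eqref{ptplus} and \eqref{ptminus} — after translating projections $e_\lambda$ into the corresponding combinations of spherical functions — completes the identification.

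I expect the main obstacle to be the bookkeeping in passing between the three bases in play: the projections $e_\lambda$ (which give the transition matrix), the homogeneous invariants $p_\lambda$, and the orthogonal invariants $q_\lambda$. The operators $R_t^\pm$ act diagonally on neither $\{p_\lambda\}$ nor $\{e_\lambda\}$, and the cleanest description is in terms of how they rescale the argument of $q_\lambda$; converting that clean statement into the entries $p_t^\pm(\alpha,\beta)$ requires the change-of-basis relation \eqref{qinp} and its inverse, and care with the dimension ratios $d_\beta/d_\alpha$ that appear in \eqref{ptplus} versus their absence in \eqref{ptminus}. A secondary technical point is tracking the factors of $|s|$, the $\sqrt{2}$ scaling conventions flagged in Remark~\ref{rem:conventions}, and the differing dilation constants for the $+$ and $-$ cases, which is exactly why the two formulas \eqref{ptplus} and \eqref{ptminus} come out with the asymmetric roles of $e^{-t}$ and $1-e^{-t}$.

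As a consistency check I would verify that the resulting rates $q^\pm=\partial_t p_t^\pm|_{t=0}$ reproduce \eqref{q+} and \eqref{q-}, and that $\sum_\beta p_t^\pm(\alpha,\beta)=1$ using the binomial-type summation identities already invoked in the proof of Theorem~\ref{thm:semibirth} (in particular \eqref{BRCorr34} and Corollary~\ref{cor:aux1}); this both confirms the normalization as genuine transition probabilities and pins down the correspondence with the elementary construction of Section~\ref{s:birth-death-Kaction}.
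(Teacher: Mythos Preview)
Your overall framework---pulling back to $\chk$ via Proposition~\ref{prop:biane312} and tracking how $\widetilde R_t^\pm$ acts on the spherical functions $\varphi_{\pm1,\lambda}$---is exactly the approach the paper takes. Where your proposal goes wrong is the claim that in \emph{both} cases ``the whole composition $\widetilde R_t^\pm$ \ldots\ sends $q_\lambda(z)\mapsto q_\lambda(cz)$ \ldots\ up to an overall exponential prefactor in $t$ and $|\lambda|$.'' This is true for only one of the two semigroups. In the other case, when you multiply $\varphi_{-1,\lambda}$ by $\exp[-(1-e^{-t})\psi]$, the $iw$-part of the exponential shifts to parameter $-e^{-t}$ but the Gaussian part shifts by a \emph{different} amount, leaving a genuine $z$-dependent factor $\exp[-(1-e^{-t})\gamma(z)]$ in front of $q_\lambda(z)$. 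That factor cannot be absorbed into a constant depending only on $t$ and $|\lambda|$.

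Consequently the expansion you need in that case is not of $q_\lambda(cz)$ but of $e^{-c\gamma(z)}q_\lambda(z)$ into the system $\{q_\beta(z\sqrt{1-c})\}_\beta$. This is a qualitatively different problem: it is an \emph{infinite} orthogonal expansion in the weighted $L^2$ space (not the finite change of basis between $\{p_\alpha\}$ and $\{q_\alpha\}$), it requires a separate convergence argument, and it is precisely where the dimension ratios $d_\beta/d_\alpha$ enter---they come from the norms $\iloskal{q_\lambda}{q_\lambda}_1=1/d_\lambda$, not from any change-of-basis bookkeeping. The paper isolates this as Lemma~\ref{lem:explag}. So your plan handles one semigroup correctly (via what is Lemma~\ref{lem:qcz} in the paper) but misses the key analytic ingredient for the other; the asymmetry you flag as a ``secondary technical point'' is in fact the main new computation.
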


\begin{proof}
Following Biane \cite{biane1998}, we will find the formula for $(p_t^+)_t$ by decomposing a positive definite function $\varphi_{1,\alpha}\exp\left[-\left(e^t-1\right)\psi\right]$ into a convex combination
\begin{equation}\label{rozpiska1}
\varphi_{1,\alpha}\exp\left[-\left(e^t-1\right)\psi\right]=\sum_{\beta\in\Lambda} p_t^+(\alpha,\beta) \varphi_{\exp(t),\beta}.
\end{equation}
The left hand side of \eqref{rozpiska1} is equal to
\[
\exp\left[e^t i w-\frac{1}{2}e^t\norma{}{z}^2\right]q_\alpha(z).
\]
Hence, from Lemma \ref{lem:qcz} taken with $c=\exp(-t)$ and $z$ rescaled as $z\sqrt{\exp(t)}$, it equals
\begin{multline*}
\exp\left[e^{t}iw-\frac{1}{2}e^{t}\norma{}{z}^2]\right]\sum_{|\beta|\le|\alpha|}
\genbin{\alpha}{\beta}\left(e^{-t}\right)^{|\beta|}\left(1-e^{-t}\right)^{|\alpha|-|\beta|}q_\beta\left(z\sqrt{e^t}\right)\\
=\sum_{|\beta|\le|\alpha|}\genbin{\alpha}{\beta}\left(e^{-t}\right)^{|\beta|}\left(1-e^{-t}\right)^{|\alpha|-|\beta|}
\varphi_{\exp(t),\beta}.
\end{multline*}
Comparing this with the right hand side of \eqref{rozpiska1} we get the desired conclusion.

Similarly, for the semigroup $(p_t^-)_t$, the decomposition analogous to \eqref{rozpiska1} reads
\begin{equation}\label{rozpiska2}
\varphi_{-1,\alpha}\exp\left[-\left(1-e^{-t}\right)\psi\right]=\sum_{\beta\in\Lambda} p_t^-(\alpha,\beta) \varphi_{-\exp(-t),\beta}.
\end{equation}
Since the left hand side of \eqref{rozpiska2} is
\begin{equation*}
\exp\left[-\left(1-e^{-t}\right)\norma{}{z}^2\right]q_\alpha(z)\exp\left[-e^{-t}iw-\frac{1}{2}e^{-t}\norma{}{z}^2\right],
\end{equation*}
we can use Lemma \ref{lem:explag} with $c=1-\exp(-t)$ to expand it into
\begin{multline*}
\sum_{\beta\in\Lambda}\genbin{\beta}{\alpha}\frac{d_\beta}{d_\alpha}\left(1-e^{-t}\right)^{|\beta|-|\alpha|} \left(e^{-t}\right)^{|\alpha|+n}q_{\beta}\left(z\sqrt{e^{-t}}\right)\exp\left[-\left(1-e^{-t}\right)\norma{}{z}^2\right]=\\
\sum_{\beta\in\Lambda}\genbin{\beta}{\alpha}\frac{d_\beta}{d_\alpha}\left(1-e^{-t}\right)^{|\beta|-|\alpha|} \left(e^{-t}\right)^{|\alpha|+n}
\varphi_{-\exp(-t),\beta},
\end{multline*}
and the formula for $(p_t^-)_t$ follows from comparing the latter expression with the right hand side of \eqref{rozpiska2}.
\end{proof}

\begin{remark}\label{rem:Biane}
In \cite{biane1998}, as an example, Biane considered the quantum Ornstein-Uhlenbeck process associated with the Gelfand pair $(K,H)$, with $K=\textrm{SO}(\rzecz,n)\times T(1)$ and $H=\zesp^n\times\rzecz$. Biane suggested a way (different from our approach) to compute the restrictions of the semigroups $(R_t^\pm)_t$ to $\pi_{\pm1}\left(\chk\right)$ in this case. However, the required computations were quite tedious, so in \cite{biane1998} the transition rates were given only for $n=2$ and in the limiting case $n\to\infty$. Thus the rates obtained for action (iv) in Example \ref{ex:fiveactions} complement and enhance Biane's finding. 

It is worth noticing that although our derivations of the rates presented in Example \ref{ex:fiveactions} formally require $n\ge 3$, the formulas we get agree when $n=2$ with the ones obtained by Biane as well. 
It is important, however, to keep in mind that, due to a different indexing Biane uses for the decomposition \eqref{decompos}, his processes live on the pairs of non-negative integers $(d,m)$ (see p. 65 of \cite{biane1998}), which correspond to the partitions $(d+m,m)$ in our setup. 
\end{remark}

\section{Auxiliary results}\label{s:aux}
In this section we collect some technical results which have been used in the paper.

\begin{lemma}\label{lem:laguerre_gamma}
For every $k\in\nat$
\begin{equation}
\sum_{|\alpha|=k} d_\alpha q_\alpha(z)=L^{(n-1)}_k(\gamma(z)),
\end{equation}
where $L^{(n-1)}_k$ is the $k$-th (generalized) Laguerre polynomial of order $(n-1)$ given by \eqref{laguerre}.
\end{lemma}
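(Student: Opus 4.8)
The statement asserts that summing the dimension-weighted orthogonal polynomials $q_\alpha$ over all $\alpha$ with fixed degree $|\alpha|=k$ recovers a single classical Laguerre polynomial $L_k^{(n-1)}$ evaluated at $\gamma(z)=\iloskal{z}{z}$. My first move is to pass from the $q_\alpha$ basis to the $p_\alpha$ basis using the defining relation \eqref{qinp}, namely $q_\alpha(z)=\sum_{|\beta|\le|\alpha|}(-1)^{|\beta|}\genbin{\alpha}{\beta}p_\beta(z)$. Substituting this into the left-hand side and interchanging the order of summation gives
\begin{equation*}
\sum_{|\alpha|=k} d_\alpha q_\alpha(z)=\sum_{j=0}^{k}(-1)^{j}\Bigl(\sum_{|\beta|=j}\Bigl(\sum_{|\alpha|=k}d_\alpha\genbin{\alpha}{\beta}\Bigr)p_\beta(z)\Bigr).
\end{equation*}
The problem therefore reduces to evaluating the inner double sum $\sum_{|\alpha|=k}d_\alpha\genbin{\alpha}{\beta}$ for each fixed $\beta$ with $|\beta|=j\le k$.

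\textbf{Key identity needed.} The crux is a summation formula of the shape $\sum_{|\alpha|=k}d_\alpha\genbin{\alpha}{\beta}=c(k,j,n)\,d_\beta$ for some explicit constant depending only on $k$, $j=|\beta|$, and $n$. This is precisely the kind of ``dimension-weighted'' binomial identity that Benson and Ratcliff establish; I would expect it to follow from (or be a companion to) the reciprocity between $\genbin{\alpha}{\beta}d_\beta$ and $\genbin{\alpha}{\beta}d_\alpha$, together with a normalization that the coefficients $\genbin{\alpha}{\beta}$ with $|\alpha|-|\beta|$ fixed sum in a controlled way. Assuming the identity takes the form with constant $\binom{k+n-1}{k-j}/\binom{j+n-1}{0}$-type factors, the sum collapses so that each $p_\beta$-contribution becomes proportional to $\sum_{|\beta|=j}d_\beta p_\beta$. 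At this point I invoke the group-theoretic fact that $\sum_{|\beta|=j}d_\beta p_\beta$ is the reproducing object for the homogeneous component $\jednorod{j}$; since each $p_\beta$ is $K$-invariant and homogeneous of degree $2j$, and the full homogeneous piece is $U(V)$-invariant, this sum must be a $U(V)$-invariant polynomial of degree $2j$, hence a scalar multiple of $\gamma^{j}$. Comparing with the $U(V)$ computation in Example \ref{ex:ordbincoeff}, specifically \eqref{pmUn}, pins down $\sum_{|\beta|=j}d_\beta p_\beta=\tfrac{(n-1)!}{(j+n-1)!}\,\binom{j+n-1}{n-1}\gamma^{j}=\tfrac{\gamma^j}{j!}$.

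\textbf{Assembling the result.} Once the inner sums are evaluated, the left-hand side becomes an explicit polynomial in $\gamma(z)$ of the form $\sum_{j=0}^{k}(-1)^{j}a_{k,j,n}\,\gamma^j/j!$. The final step is to match these coefficients against the explicit Laguerre expansion in \eqref{laguerre}, namely $L_k^{(n-1)}(x)=\sum_{j=0}^{k}(-1)^{j}\binom{k+n-1}{k-j}x^j/j!$. I anticipate the constants will align exactly, yielding the claim. \emph{The main obstacle} is supplying or correctly quoting the dimension-weighted summation identity $\sum_{|\alpha|=k}d_\alpha\genbin{\alpha}{\beta}$; everything downstream is bookkeeping. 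This identity is the representation-theoretic heart of the lemma and is likely where the authors cite a specific result of Benson–Ratcliff (plausibly a reciprocity or ``contiguity'' relation for the generalized binomial coefficients analogous to \eqref{BRCorr34}), so I would locate that exact formula before committing to the constants above. An alternative, possibly cleaner route avoids the explicit constant entirely: since both sides are $K$-invariant homogeneous polynomials of degree $2k$ that depend only on $\gamma$ — the left side because the $U(V)$-total over each degree is radial, the right side by construction — it suffices to verify the two radial polynomials agree by testing against the Fock inner product \eqref{fockinner} with $a=1$, using orthonormality of the $q_\alpha$ and the reduction to one-dimensional Laguerre orthogonality already exploited in Example \ref{ex:ordbincoeff}; I would pursue whichever of these the preceding lemmas make most immediate.
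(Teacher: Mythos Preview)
Your primary approach hinges on the identity $\sum_{|\alpha|=k}d_\alpha\genbin{\alpha}{\beta}=d_\beta\binom{k+n-1}{k-|\beta|}$, which you hope to locate ready-made in Benson--Ratcliff. In this paper that identity is precisely Corollary~\ref{cor:aux1}, and it is \emph{derived from} Lemma~\ref{lem:laguerre_gamma}, not used to prove it. So as written your argument is circular: the ``main obstacle'' you correctly flag is, in the paper's logical order, the content of the lemma repackaged. None of the Benson--Ratcliff formulas actually quoted here (in particular \eqref{BRCorr34}) yields that dimension-weighted sum directly, and you supply no independent derivation.

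The paper's proof avoids this trap by a completely different mechanism: it invokes the differential-operator identity $p_\alpha\bigl(\partial/\partial\bar z,-\partial/\partial z\bigr)(e^{-\gamma})=q_\alpha e^{-\gamma}$ from \cite[Proposition 4.7]{BJR92}, sums over $|\alpha|=k$ with weights $d_\alpha$, applies \eqref{BRCombiLemma32} to recognize the resulting operator as $(-1)^k\Delta^k/k!$, and finishes with the classical Rodrigues formula $\Delta^k(e^{-\gamma})=(-1)^k k!\,e^{-\gamma}L_k^{(n-1)}(\gamma)$. Your alternative orthogonality route can in fact be made to work---expand $L_k^{(n-1)}(\gamma)$ in the $(q_\alpha)$ basis, kill the $|\alpha|<k$ terms by orthogonality to lower degrees, and compute the surviving coefficients via $\langle \gamma^k/k!,q_\alpha\rangle_1$ using \eqref{BRCombiLemma32} and \eqref{iloskalqq}---but you have not carried it out, and the step you treat as evident (that the left side is radial, i.e.\ $U(V)$-invariant) is not obvious a priori for a general $K\subsetneq U(V)$; establishing it is essentially equivalent to the lemma.
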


\begin{proof}
Benson, Jenkins and Ratcliff in \cite[Proposition 4.7]{BJR92} proved the formula
\begin{equation}
p_\alpha\left(\frac{\partial}{\partial \bar{z}},-\frac{\partial}{\partial z}\right)\left(e^{-\gamma}\right)=q_\alpha e^{-\gamma},
\end{equation}
in which $p_\alpha\left(\frac{\partial}{\partial \bar{z}},-\frac{\partial}{\partial z}\right)$ stands for the differential operator on $V_\rzecz$ obtained by replacing each occurrence of $z_j$ in $p_\alpha$ by $\frac{\partial}{\partial \bar{z}_j}$, and each occurrence of $\bar{z}_j$ by $-\frac{\partial}{\partial z_j}$. The formula, along with Lemma 3.2 from \cite{BensonRatcliff98} asserting that
\begin{equation}\label{BRCombiLemma32}
\sum_{|\alpha|=m}d_\alpha p_\alpha=\frac{\gamma^m}{m!},
\end{equation}
implies 
\begin{equation*}
\sum_{|\alpha|=k}d_\alpha q_\alpha=
e^\gamma \sum_{|\alpha|=k}d_\alpha q_\alpha e^{-\gamma}=
e^\gamma\frac{\gamma^k\left(\frac{\partial}{\partial \bar{z}},-\frac{\partial}{\partial z}\right)\left(e^{-\gamma}\right)}{k!}=
e^\gamma\frac{(-1)^k}{ k!} \Delta^k\left(e^{-\gamma}\right),
\end{equation*}
where 
\[
\Delta=\sum_{j=1}^n \frac{\partial}{\partial z_j}\frac{\partial}{\partial \bar{z}_k}.
\]
Since
\[
\Delta^k\left(e^{-\gamma}\right)=(-1)^k k! e^{-\gamma} L_k^{(n-1)}(\gamma),
\]
the lemma follows.
\end{proof}

\begin{corollary}\label{cor:aux1}
For all $k\in\nat$ and $\beta\in\Lambda$ with $|\beta|\le k$,
\begin{equation*}
\sum_{|\alpha|=k}d_\alpha\genbinab=d_\beta\binom{k+n-1}{k-|\beta|}.
\end{equation*}
\end{corollary}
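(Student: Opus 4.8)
The plan is to combine the two facts from Lemma \ref{lem:laguerre_gamma}, namely the closed form $\sum_{|\alpha|=k}d_\alpha q_\alpha(z)=L^{(n-1)}_k(\gamma(z))$, with the defining expansion \eqref{qinp} of $q_\alpha$ in the homogeneous basis $(p_\beta)_\beta$. The idea is that $\sum_{|\alpha|=k}d_\alpha\genbinab$ is exactly the coefficient of $p_\beta$ when $\sum_{|\alpha|=k}d_\alpha q_\alpha$ is written in the $p$-basis, so I can read off this coefficient in two independent ways and equate them.

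First I would substitute \eqref{qinp} into the left-hand side of Lemma \ref{lem:laguerre_gamma}, obtaining
\begin{equation*}
\sum_{|\alpha|=k}d_\alpha q_\alpha=\sum_{|\alpha|=k}d_\alpha\sum_{|\beta|\le|\alpha|}(-1)^{|\beta|}\genbinab\,p_\beta=\sum_{|\beta|\le k}(-1)^{|\beta|}\left(\sum_{|\alpha|=k}d_\alpha\genbinab\right)p_\beta,
\end{equation*}
where interchanging the order of summation is harmless since everything is finite. Thus the quantity I want, $\sum_{|\alpha|=k}d_\alpha\genbinab$, is (up to the sign $(-1)^{|\beta|}$) the coefficient of $p_\beta$ in the expansion of $L^{(n-1)}_k(\gamma(z))$ in the $p$-basis. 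The second half of the computation is to expand $L^{(n-1)}_k(\gamma)$ directly: using the explicit formula \eqref{laguerre} one has $L^{(n-1)}_k(\gamma)=\sum_{i=0}^k(-1)^i\binom{k+n-1}{k-i}\gamma^i/i!$, and then I would rewrite each power $\gamma^i$ in terms of the $p_\beta$ with $|\beta|=i$ by invoking \eqref{BRCombiLemma32}, which gives $\gamma^i/i!=\sum_{|\beta|=i}d_\beta p_\beta$.

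Putting these together yields
\begin{equation*}
L^{(n-1)}_k(\gamma)=\sum_{i=0}^k(-1)^i\binom{k+n-1}{k-i}\sum_{|\beta|=i}d_\beta p_\beta=\sum_{|\beta|\le k}(-1)^{|\beta|}\binom{k+n-1}{k-|\beta|}d_\beta\,p_\beta.
\end{equation*}
Since $\{p_\beta\}_\beta$ is a vector space basis for $\zesp[V_\rzecz]^K$ (as recalled after \eqref{plambda}), the coefficients of $p_\beta$ in the two displayed expressions must agree term by term. Comparing, for a fixed $\beta$ with $|\beta|\le k$ the coefficient $(-1)^{|\beta|}\sum_{|\alpha|=k}d_\alpha\genbinab$ equals $(-1)^{|\beta|}\binom{k+n-1}{k-|\beta|}d_\beta$, and cancelling the sign gives precisely the claimed identity.

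The only genuinely delicate point is the appeal to linear independence of $\{p_\beta\}$: I must make sure both sides are legitimately expressed in this single basis, which is why reindexing the double sum so that the outer sum runs over $\beta$ (rather than $\alpha$) is the essential bookkeeping step. Everything else is substitution of already-quoted formulas, so I expect no real obstacle beyond keeping the summation ranges and the $(-1)^{|\beta|}$ factors straight; the identity then follows immediately from matching coefficients.
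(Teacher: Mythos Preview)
Your proof is correct and follows essentially the same approach as the paper: both combine Lemma~\ref{lem:laguerre_gamma} with the expansion \eqref{qinp}, use the explicit Laguerre formula \eqref{laguerre} together with \eqref{BRCombiLemma32} to rewrite $\gamma^i/i!$ in the $p$-basis, and then match coefficients of $p_\beta$. The only cosmetic difference is that the paper first isolates the homogeneous component of degree $2l$ before invoking \eqref{BRCombiLemma32}, whereas you apply \eqref{BRCombiLemma32} directly inside the Laguerre expansion; the two orderings are equivalent.
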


\begin{proof}
Comparing homogeneous parts of degree $2l$ ($l=0,\ldots,k$) on both sides of the equality
\[
\sum_{|\alpha|=k} d_\alpha\sum_{|\beta|\le|\alpha|}(-1)^\beta\genbinab p_\beta=\sum_{i=0}^k (-1)^i\binom{k+n-1}{k-i}\frac{\gamma^i}{i!}
\]
(which follows from Lemma \ref{lem:laguerre_gamma} and \eqref{qinp}), gives
\[
\sum_{|\alpha|=k,|\beta|=l} d_\alpha \genbinab p_\beta=\binom{k+n-1}{k-l}\frac{\gamma^l}{l!}.
\]
Hence, by \eqref{BRCombiLemma32},
\[
\sum_{|\alpha|=k,|\beta|=l} d_\alpha \genbinab p_\beta=\sum_{|\beta|=l}d_\beta\binom{k+n-1}{k-l} p_\beta,
\]
and the corollary follows immediately by comparing the coefficients of $p_\beta$ on both sides.
\end{proof}

\begin{corollary}\label{cor:sums_of_rates}
For $\beta\in\Lambda$
\begin{eqnarray*}
\sum_{|\alpha|=|\beta|+1} \frac{d_\alpha}{d_\beta}\genbinab&=&n+|\beta|,\\
\sum_{|\alpha|=|\beta|-1}\genbinba&=&|\beta|.
\end{eqnarray*}
\end{corollary}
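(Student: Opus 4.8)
The plan is to prove both identities in Corollary \ref{cor:sums_of_rates} by specializing Corollary \ref{cor:aux1}, which already packages the combinatorial content I need. The key observation is that the two sums in Corollary \ref{cor:sums_of_rates} are exactly the boundary cases of Corollary \ref{cor:aux1} in which the difference between the degrees equals one.

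For the first identity, I would apply Corollary \ref{cor:aux1} with $k=|\beta|+1$. Since every $\alpha$ appearing in the sum $\sum_{|\alpha|=|\beta|+1}\frac{d_\alpha}{d_\beta}\genbinab$ has $|\alpha|=k=|\beta|+1$, Corollary \ref{cor:aux1} gives directly
\[
\sum_{|\alpha|=|\beta|+1} d_\alpha\genbinab=d_\beta\binom{(|\beta|+1)+n-1}{(|\beta|+1)-|\beta|}=d_\beta\binom{n+|\beta|}{1}=d_\beta(n+|\beta|).
\]
Dividing through by $d_\beta$ yields $n+|\beta|$, as claimed. This step is essentially immediate once the roles of $k$ and $\beta$ are matched correctly.

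For the second identity, $\sum_{|\alpha|=|\beta|-1}\genbinba=|\beta|$, the situation is reversed: here $\beta$ plays the role of the larger index and $\alpha$ the smaller one, so I cannot invoke Corollary \ref{cor:aux1} verbatim. The cleanest route is to go back to the companion summation formula \eqref{BRCorr34} of Benson and Ratcliff, which states that $\sum_{|\lambda|=l}\genbin{\beta}{\lambda}\genbin{\lambda}{\alpha}=\frac{(|\beta|-|\alpha|)!}{(|\beta|-l)!(l-|\alpha|)!}\genbin{\beta}{\alpha}$. Taking $\alpha=0$ (the unique index with $|0|=0$, for which $\genbin{\lambda}{0}=1$ by the normalization $q_\lambda(0)=1$ together with \eqref{qinp}, since the constant term of $q_\lambda$ is $1$), choosing $l=|\beta|-1$, and using $\genbin{\beta}{0}=1$, the right-hand side collapses to $\frac{|\beta|!}{1!\,(|\beta|-1)!}=|\beta|$, while the left-hand side becomes $\sum_{|\lambda|=|\beta|-1}\genbin{\beta}{\lambda}$, which is exactly the sum in question after renaming $\lambda$ to $\alpha$.

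The main obstacle, such as it is, lies in verifying the two normalization facts $\genbin{\lambda}{0}=1$ and $\genbin{\beta}{0}=1$ that make the second computation work; these follow from the convention $q_\lambda(0)=1$ and the expansion \eqref{qinp}, since setting $z$ to a point where all positive-degree $p_\alpha$ vanish isolates the $\alpha=0$ term. An alternative, fully self-contained derivation of the second identity would mirror the proof of Corollary \ref{cor:aux1} itself: one compares homogeneous components in $\sum_{|\alpha|=|\beta|-1}$ using \eqref{BRCombiLemma32} and the relation \eqref{homcomphighdeg} between $q_\lambda$ and $p_\lambda$, but invoking \eqref{BRCorr34} with $\alpha=0$ is the shorter path and I would present that.
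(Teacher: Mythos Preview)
Your proof is correct. The first identity is handled exactly as in the paper: both invoke Corollary~\ref{cor:aux1} with $k=|\beta|+1$.

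For the second identity your route differs from the paper's. You specialize \eqref{BRCorr34} at $\alpha=0$, $l=|\beta|-1$, using the normalization $\genbin{\lambda}{0}=1$ (which you correctly extract from $q_\lambda(0)=1$, $p_0\equiv1$, and the vanishing of $p_\alpha$ at the origin for $|\alpha|>0$). The paper instead imports the three-term relation for $\gamma q_\beta$ from \cite[Proposition~3.7]{BensonRatcliff98}, evaluates it at $z=0$ to obtain
\[
\sum_{|\alpha|=|\beta|+1}\frac{d_\alpha}{d_\beta}\genbin{\alpha}{\beta}+\sum_{|\alpha|=|\beta|-1}\genbin{\beta}{\alpha}=2|\beta|+n,
\]
and then subtracts the first identity. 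Your argument has the advantage of staying entirely within tools already quoted in the paper (\eqref{BRCorr34} is used in the proof of Theorem~\ref{thm:semibirth}), so it is more self-contained. The paper's argument, on the other hand, exhibits the two sums as complementary pieces of a single recurrence, which is structurally informative even if it costs an extra external citation.
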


\begin{proof}
The first equality is just a particular case of Corollary \ref{cor:aux1} with $k=|\beta|+1$. For the proof of the second equality, we use the following observation from \cite[Proposition 3.7]{BensonRatcliff98}:
\[
\gamma q_\beta=-\sum_{|\alpha|=|\beta|+1}\frac{d_\alpha}{d_\beta}\genbin{\alpha}{\beta}q_\alpha
+(2|\beta|+n)q_\beta-\sum_{|\alpha|=|\beta|-1}\genbin{\beta}{\alpha}q_\alpha.
\]
Evaluating both sides at $z=0$ and recalling the normalization of the polynomials $(q_\alpha)_\alpha$, we get
\[
\sum_{|\alpha|=|\beta|+1}\frac{d_\alpha}{d_\beta}\genbin{\alpha}{\beta}+\sum_{|\alpha|=|\beta|-1}\genbin{\beta}{\alpha}=
(2|\beta|+n).
\]
\end{proof}
\begin{lemma}\label{lem:sum}
For $m\in\nat$ and $\alpha\in\Lambda$ with $|\alpha|\ge m$, 
\begin{equation}\label{64}
\sum_{|\beta|=m}\genbin{\alpha}{\beta}=\binom{|\alpha|}{m}.
\end{equation}
\end{lemma}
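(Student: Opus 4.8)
The plan is to reduce \eqref{64} to the multiplicativity identity \eqref{BRCorr34} of Benson and Ratcliff by specializing its lower index to $0\in\Lambda$. The single new ingredient needed is the boundary value $\genbin{\lambda}{0}=1$, valid for every $\lambda\in\Lambda$, so I would establish that first.

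To get $\genbin{\lambda}{0}=1$, I would evaluate the defining expansion \eqref{qinp} at $z=0$. Each $p_\alpha$ is homogeneous of degree $2|\alpha|$, so $p_\alpha(0)=0$ whenever $|\alpha|\ge 1$, while $P_0=\jednorod{0}\cong\zesp$ gives $p_0\equiv 1$. Hence only the $\alpha=0$ term of \eqref{qinp} survives at the origin, yielding $q_\lambda(0)=\genbin{\lambda}{0}$. The normalization $q_\lambda(0)=1$ then forces $\genbin{\lambda}{0}=1$ for all $\lambda$.

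Next I would apply \eqref{BRCorr34}, in which the roles are played by a top index, the summation index, and a bottom index. Substituting the top index by our $\alpha$, the summation variable by our $\beta$, the bottom index by $0$, and the free parameter $l$ by $m$, the hypotheses $|0|\le m\le|\alpha|$ of \eqref{BRCorr34} are exactly the standing assumption $|\alpha|\ge m$, and the identity becomes
\[
\sum_{|\beta|=m}\genbin{\alpha}{\beta}\genbin{\beta}{0}=\frac{(|\alpha|-0)!}{(|\alpha|-m)!\,(m-0)!}\,\genbin{\alpha}{0}=\binom{|\alpha|}{m}\genbin{\alpha}{0}.
\]
Inserting $\genbin{\beta}{0}=\genbin{\alpha}{0}=1$ from the previous step collapses the left-hand side to $\sum_{|\beta|=m}\genbin{\alpha}{\beta}$ and the right-hand side to $\binom{|\alpha|}{m}$, which is precisely \eqref{64}.

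I do not expect a genuine obstacle: the combinatorial weight of the statement is carried entirely by \eqref{BRCorr34}, and everything else is bookkeeping. The only point demanding a little care is the index relabeling, specifically checking that the degree constraint $|0|\le m\le|\alpha|$ implicit in \eqref{BRCorr34} is met — and this is guaranteed exactly by the hypothesis $|\alpha|\ge m$.
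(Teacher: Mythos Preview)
Your argument is correct and considerably slicker than the paper's. The paper proves \eqref{64} by induction on $k=|\alpha|-m$: the base cases $k=0$ and $k=1$ are handled via \eqref{zeroone} and Corollary~\ref{cor:sums_of_rates} respectively, and the inductive step applies \eqref{BRCorr34} at level $l=|\beta|+1$, swaps the order of summation, and invokes Corollary~\ref{cor:sums_of_rates} again before appealing to the hypothesis. That route therefore drags in the whole chain Lemma~\ref{lem:laguerre_gamma} $\Rightarrow$ Corollary~\ref{cor:aux1} $\Rightarrow$ Corollary~\ref{cor:sums_of_rates}, plus an external three-term recurrence from \cite{BensonRatcliff98}. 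Your approach bypasses all of this: once you observe (from \eqref{qinp}, the homogeneity of the $p_\alpha$, and the normalization $q_\lambda(0)=1$) that $\genbin{\lambda}{0}=1$, a single application of \eqref{BRCorr34} with bottom index $0$ finishes the job. The paper's method has the minor virtue of illustrating how \eqref{BRCorr34} interacts with the Pieri-type identity in Corollary~\ref{cor:sums_of_rates}, but yours is the natural proof and would be a strict improvement here.
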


\begin{proof}
We proceed by induction on $k=|\alpha|-m$. The result holds for $k=0$ by \eqref{zeroone}, and for $k=1$ by Corollary \ref{cor:sums_of_rates}. For the inductive step, let $k\ge1$ and assume \eqref{64} for $k-1$. By \eqref{BRCorr34}
\begin{equation*}
\sum_{|\lambda|=|\beta|+1}\genbin{\alpha}{\lambda}\genbin{\lambda}{\beta}=k\genbin{\alpha}{\beta}.
\end{equation*}
Hence
\begin{equation*}
\sum_{|\beta|=m}\genbin{\alpha}{\beta}=\frac{1}{k}\sum_{|\beta|=m}\sum_{|\lambda|=m+1}\genbin{\alpha}{\lambda}\genbin{\lambda}{\beta}.
\end{equation*}
Exchanging the order of summation and using Corollary \ref{cor:sums_of_rates} one gets
\begin{equation*}
\sum_{|\beta|=m}\genbin{\alpha}{\beta}=\frac{m+1}{k}\sum_{|\lambda|=m+1}\genbin{\alpha}{\lambda}=\frac{m+1}{k}\binom{|\alpha|}{m+1}=\binom{|\alpha|}{m}
\end{equation*}
(the second-to-last equality follows by using the inductive hypothesis), which completes the induction.
\end{proof}

\begin{lemma}\label{lem:explag}
For any $c\in(0,1)$ and $\lambda\in\Lambda$
\begin{equation}\label{explag}
\exp[-c\gamma(z)]q_\lambda(z)=\sum_{\beta\in\Lambda}\genbin{\beta}{\lambda}\frac{d_\beta}{d_\lambda}c^{|\beta|-|\lambda|}(1-c)^{|\lambda|+n}q_\beta(z\sqrt{1-c}),
\end{equation}
with the series converging absolutely and uniformly on compact subsets of $V$.
\end{lemma}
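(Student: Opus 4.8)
The plan is to prove \eqref{explag} by expanding the left--hand side in the orthogonal system $\{q_\beta(\cdot\sqrt{1-c})\}_\beta$ and computing its coefficients, rather than summing the right--hand side directly (a direct summation would require a closed form for $\sum_{|\beta|=l}d_\beta\genbin{\beta}{\lambda}\genbin{\beta}{\mu}$, which I do not have). The starting point is the purely polynomial contraction identity
\begin{equation*}
q_\beta\!\left(z\sqrt{1-c}\right)=\sum_{|\lambda|\le|\beta|}\genbin{\beta}{\lambda}c^{|\beta|-|\lambda|}(1-c)^{|\lambda|}q_\lambda(z),
\end{equation*}
which I would establish by comparing coefficients in the basis $\{p_\mu\}$. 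Since each $p_\mu$ is homogeneous of degree $2|\mu|$, the left--hand side has $p_\mu$-coefficient $(-1)^{|\mu|}\genbin{\beta}{\mu}(1-c)^{|\mu|}$ by \eqref{qinp}; on the right--hand side, inserting \eqref{qinp} and grouping the terms of a fixed degree $|\lambda|=l$, the inner sum $\sum_{|\lambda|=l}\genbin{\beta}{\lambda}\genbin{\lambda}{\mu}$ is evaluated by \eqref{BRCorr34}, after which the binomial theorem collapses the $l$-sum to exactly $(-1)^{|\mu|}\genbin{\beta}{\mu}(1-c)^{|\mu|}$. This identity yields the coefficient of $q_\lambda(z)$ in $q_\beta(z\sqrt{1-c})$ explicitly.

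Next I would record two facts about $\{q_\lambda\}$ in $\left(\zesp[V_\rzecz]^K,\iloskal{\cdot}{\cdot}_1\right)$. They are orthogonal, being produced by Gram--Schmidt (subsection \ref{ss:polynomials}), and I claim $\iloskal{q_\lambda}{q_\lambda}_1=1/d_\lambda$. For the norm I would use the intertwining relation $q_\lambda e^{-\gamma}=p_\lambda\!\left(\frac{\partial}{\partial \bar{z}},-\frac{\partial}{\partial z}\right)e^{-\gamma}$ recalled in the proof of Lemma \ref{lem:laguerre_gamma}. Writing $D_\lambda=p_\lambda\!\left(\frac{\partial}{\partial \bar{z}},-\frac{\partial}{\partial z}\right)$, which is formally self-adjoint for Lebesgue measure because it carries equally many holomorphic and antiholomorphic derivatives, one gets
\[
\iloskal{q_\lambda}{q_\lambda}_1=\left(\tfrac{1}{\pi}\right)^n\!\int_V q_\lambda\, D_\lambda\!\left(e^{-\gamma}\right)\textrm{d}z=\left(\tfrac{1}{\pi}\right)^n\!\int_V (D_\lambda q_\lambda)\, e^{-\gamma}\textrm{d}z=(D_\lambda q_\lambda)(0),
\]
the last equality holding because $D_\lambda q_\lambda$ has bidegree $(0,0)$ and is therefore a constant. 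Evaluating this constant via \eqref{homcomphighdeg} (so that $D_\lambda q_\lambda=(-1)^{|\lambda|}D_\lambda p_\lambda$) together with the orthonormality of the basis $\{v_j\}$ underlying \eqref{plambda} gives $(D_\lambda q_\lambda)(0)=1/d_\lambda$.

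Then I would assemble the coefficient. Rescaling $z\mapsto z\sqrt{1-c}$ in \eqref{fockinner} shows $\iloskal{q_\beta(\cdot\sqrt{1-c})}{q_{\beta'}(\cdot\sqrt{1-c})}_{1-c}=\iloskal{q_\beta}{q_{\beta'}}_1$, so the family $\{q_\beta(\cdot\sqrt{1-c})\}_\beta$ is orthogonal for $\iloskal{\cdot}{\cdot}_{1-c}$ with squared norm $1/d_\beta$. Because $e^{-c\gamma}e^{-(1-c)\gamma}=e^{-\gamma}$,
\[
\iloskal{e^{-c\gamma}q_\lambda}{q_\beta(\cdot\sqrt{1-c})}_{1-c}=(1-c)^n\iloskal{q_\lambda}{q_\beta(\cdot\sqrt{1-c})}_1,
\]
and substituting the contraction identity and using orthogonality together with $\iloskal{q_\lambda}{q_\lambda}_1=1/d_\lambda$ evaluates the right--hand side as $(1-c)^n\genbin{\beta}{\lambda}c^{|\beta|-|\lambda|}(1-c)^{|\lambda|}/d_\lambda$. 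Dividing by the squared norm $1/d_\beta$ produces precisely $\genbin{\beta}{\lambda}\frac{d_\beta}{d_\lambda}c^{|\beta|-|\lambda|}(1-c)^{|\lambda|+n}$, the coefficient claimed in \eqref{explag}.

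The main obstacle is the final analytic step: matching Fourier coefficients only pins down $e^{-c\gamma}q_\lambda$ in $L^2$, so I must argue that the series in \eqref{explag} genuinely reconstructs the function, and with the asserted absolute, locally uniform convergence. For this I would verify that $e^{-c\gamma}q_\lambda$ lies in the $K$-invariant part of $L^2\!\left(V,\left(\tfrac{1-c}{\pi}\right)^n e^{-(1-c)\gamma}\textrm{d}z\right)$, where $\{q_\beta(\cdot\sqrt{1-c})\}$ is complete, and control the tail and the pointwise convergence using the generating identity $\sum_\beta d_\beta q_\beta(w)t^{|\beta|}=(1-t)^{-n}\exp\!\left[-\gamma(w)t/(1-t)\right]$, which follows from Lemma \ref{lem:laguerre_gamma} and the classical Laguerre generating function; the non-negativity of the coefficients in \eqref{explag} should make the domination routine. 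As a consistency check, the case $\lambda=0$ of \eqref{explag} is exactly this generating identity evaluated at $t=c$ with argument $z\sqrt{1-c}$.
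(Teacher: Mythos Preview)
Your overall strategy---expand $e^{-c\gamma}q_\lambda$ in the orthogonal system $\{q_\beta(\cdot\sqrt{1-c})\}_\beta$ for $\iloskal{\cdot}{\cdot}_{1-c}$ and compute the Fourier coefficients---is exactly the paper's. The paper cites \cite{yan_unpub,BJR1998} for completeness of $(q_\lambda)_\lambda$ and for $\iloskal{q_\lambda}{q_\lambda}_1=1/d_\lambda$, whereas you derive the norm yourself via the intertwining relation; that works. Your coefficient computation is a mild variant: you first establish the contraction identity $q_\beta(z\sqrt{1-c})=\sum_\mu\genbin{\beta}{\mu}c^{|\beta|-|\mu|}(1-c)^{|\mu|}q_\mu(z)$ (this is precisely Lemma~\ref{lem:qcz} of the paper, proved there afterwards) and plug it into $\iloskal{q_\lambda}{\cdot}_1$ directly, while the paper instead expands $q_\beta(\cdot\sqrt{1-c})$ in the $p_\gamma$ basis, evaluates $\iloskal{q_\lambda}{p_\gamma}_1$ via \eqref{pinq}, and then invokes \eqref{BRCorr34}. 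Both routes use \eqref{BRCorr34} at the key step; yours is a bit shorter because it folds the binomial collapse into Lemma~\ref{lem:qcz}.

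The genuine gap is the convergence step, which you correctly flag as the obstacle but do not close. The generating identity $\sum_\beta d_\beta q_\beta(w)t^{|\beta|}=(1-t)^{-n}\exp[-\gamma(w)t/(1-t)]$ only controls the signed sum $\sum_{|\beta|=k}d_\beta q_\beta(w)$, and non-negativity of the coefficients in \eqref{explag} is of no help since $q_\beta$ itself changes sign; so the domination is not routine from this alone. The paper's device here is the uniform pointwise bound $|q_\beta(z\sqrt{1-c})|\le\exp\!\big[\tfrac{1-c}{2}\gamma(z)\big]$, obtained from the fact that the spherical function $\varphi_{1-c,\beta}$ in \eqref{spherfun} is a normalized matrix coefficient and hence bounded by $1$. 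With that bound in hand, Corollary~\ref{cor:aux1} and the negative binomial identity $\sum_{l\ge0}\binom{|\lambda|+n+l-1}{l}c^l(1-c)^{|\lambda|+n}=1$ show that the series of absolute values is dominated by $\exp\!\big[\tfrac{1-c}{2}\gamma(z)\big]$ uniformly on compacta. Inserting this single estimate into your argument completes it.
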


\begin{proof}
It was proved in \cite{yan_unpub} (see also \cite[Proposition 3.1]{BJR1998}) that the set $(q_\lambda)_{\lambda\in\Lambda}$ is a complete orthogonal system in the space $L^2_K(V_\rzecz,\exp(-\gamma(z)))$ of $K$-invariant functions that are square-integrable with respect to the weight appearing in the Fock inner product $\iloskal{\cdot}{\cdot}_1$, and that
\begin{equation}\label{iloskalqq}
\iloskal{q_\lambda}{q_\lambda}_1=\frac{1}{d_\lambda}.
\end{equation}

Let $\tilde{q}_\lambda(z)=q_\lambda(z\sqrt{1-c})$ and $g(z)=\exp[-c\gamma(z)]q_\lambda(z)$. Clearly, $(\tilde{q}_\lambda)_{\lambda\in\Lambda}$ is an orthogonal system in $L^2_K(V_\rzecz,\exp(-(1-c)\gamma(z)))$, the function $g$ belongs to $L^2_K(V_\rzecz,\exp(-(1-c)\gamma(z)))$, and $\iloskal{\tilde{q}_\lambda}{\tilde{q}_\lambda}_{1-c}=\iloskal{q_\lambda}{q_\lambda}_1=1/d_\lambda$. Therefore we can expand $g$ as
\[
\exp[-c\gamma(z)]q_\lambda(z)=\sum_{\beta\in\Lambda} a_\beta \tilde{q}_\beta(z),
\]
where $a_\beta=\iloskal{g}{\tilde{q}_\beta}_{1-c}/\iloskal{\tilde{q}_\beta}{\tilde{q}_\beta}_{1-c}$. By straightforward calculations,
\[
a_\beta=d_\beta(1-c^n)\iloskal{q_\lambda}{\tilde{q}_\beta}_1.
\]
Using \eqref{qinp} and the homogeneity of the polynomials $(p_\beta)_\beta$ to evaluate the latter inner product, we get
\begin{equation*}
\iloskal{q_\lambda}{\tilde{q}_\beta}_1=\sum_{|\gamma|\le|\beta|}\genbin{\beta}{\gamma}(-1)^{|\gamma|}(1-c)^{|\gamma|}\iloskal{q_\lambda}{p_\gamma}_1.
\end{equation*}
Formula (1.5) from \cite{BensonRatcliff98} stating that 
\begin{equation}\label{pinq}
p_\alpha=\sum_{|\beta|\le|\alpha|}(-1)^{|\beta|}\genbin{\alpha}{\beta}q_\beta,
\end{equation}
and \eqref{iloskalqq} together imply that
\begin{equation*}
\iloskal{q_\lambda}{p_\gamma}_1=
\begin{cases}
\frac{(-1)^{|\lambda|}}{d_\lambda}\genbin{\gamma}{\lambda}, & |\lambda|\le|\gamma|,\\
0, & |\lambda|>|\gamma|.
\end{cases}
\end{equation*}
Hence
\begin{equation*}
\iloskal{q_\lambda}{\tilde{q}_\beta}_1=\sum_{|\lambda|\le|\gamma|\le|\beta|}\genbin{\gamma}{\lambda}\genbin{\beta}{\gamma}
\frac{(-1)^{|\gamma|}(-1)^{|\lambda|}}{d_\lambda}(1-c)^{|\gamma|}=
\sum_{l=|\lambda|}^{|\beta|}\frac{(-1)^{l+|\lambda|}}{d_\lambda}(1-c)^l\sum_{|\gamma|=l}\genbin{\gamma}{\lambda}\genbin{\beta}{\gamma}.
\end{equation*}
By \eqref{BRCorr34},
\begin{equation*}
\iloskal{q_\lambda}{\tilde{q}_\beta}_1=\frac{1}{d_\lambda}\genbin{\beta}{\lambda}c^{|\beta|-|\lambda|}(1-c)^{|\lambda|},
\end{equation*}
and, consequently,
\[
a_\beta=\genbin{\beta}{\lambda}\frac{d_\beta}{d_\lambda}c^{|\beta|-|\lambda|}(1-c)^{|\lambda|+n}.
\]
Thus \eqref{explag} holds in $L^2_K(V_\rzecz,\exp(-(1-c)\gamma(z)))$. Observe that the series in \eqref{explag} also converges absolutely and uniformly on compact subsets of $V$. Indeed, since the spherical function $\varphi_{1-c,\lambda}$ is bounded by $1$ as an appropriately normalized matrix coefficient, we see that
\begin{multline*}
\sum_{\beta\in\Lambda} \left|\genbin{\beta}{\lambda}\frac{d_\beta}{d_\lambda}c^{|\beta|-|\lambda|}(1-c)^{|\lambda|+n} q_\beta(z\sqrt{1-c})\right|\le\\
\exp\left[\frac{(1-c)\gamma(z)}{2}\right](1-c)^{|\lambda|+n}\sum_\beta \genbin{\beta}{\lambda}\frac{d_\beta}{d_\lambda}c^{|\beta|-|\lambda|}=\\
\exp\left[\frac{(1-c)\gamma(z)}{2}\right](1-c)^{|\lambda|+n}\sum_{l=|\lambda|}^\infty c^{l-|\lambda|}\sum_{|\beta|=l}\genbin{\beta}{\lambda}\frac{d_\beta}{d_\lambda}.
\end{multline*} 
Using Corollary \ref{cor:aux1} and the equality
\[
\sum_{l=0}^\infty \binom{|\lambda|+n+l-1}{l}c^l(1-c)^{|\lambda|+n}=1,
\]
we arrive at the bound
\begin{equation*}
\sum_{\beta\in\Lambda} \left|\genbin{\beta}{\lambda}\frac{d_\beta}{d_\lambda}c^{|\beta|-|\lambda|}(1-c)^{|\lambda|+n} q_\beta(z\sqrt{1-c})\right|\le
\exp\left[\frac{(1-c)\gamma(z)}{2}\right],
\end{equation*}
which completes the proof of Lemma \ref{lem:explag}.
\end{proof}

\begin{lemma}\label{lem:qcz}
For $c\ge 0$,
\begin{equation*}
q_\alpha(\sqrt{c}z)=\sum_{|\beta|\le|\alpha|}\genbin{\alpha}{\beta} c^{|\beta|}(1-c)^{|\alpha|-|\beta|}q_\beta(z).
\end{equation*}

\end{lemma}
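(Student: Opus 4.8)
The plan is to pass through the homogeneous basis $\{p_\gamma\}$: expand $q_\alpha$ via \eqref{qinp}, use homogeneity to absorb the scaling by $\sqrt{c}$, re-expand back into the $\{q_\beta\}$ basis via \eqref{pinq}, and then collapse the resulting double sum with the Benson--Ratcliff convolution identity \eqref{BRCorr34} followed by the ordinary binomial theorem. Since everything in sight is a finite sum of polynomials, no convergence considerations arise (in contrast to Lemma \ref{lem:explag}).

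Concretely, I would first write, using \eqref{qinp},
\[
q_\alpha(\sqrt{c}\,z)=\sum_{|\gamma|\le|\alpha|}(-1)^{|\gamma|}\genbin{\alpha}{\gamma}\,p_\gamma(\sqrt{c}\,z).
\]
Because $p_\gamma$ is homogeneous of degree $2|\gamma|$, one has $p_\gamma(\sqrt{c}\,z)=c^{|\gamma|}p_\gamma(z)$, turning the right-hand side into $\sum_{|\gamma|\le|\alpha|}(-1)^{|\gamma|}\genbin{\alpha}{\gamma}c^{|\gamma|}p_\gamma(z)$. Substituting the inverse relation \eqref{pinq}, namely $p_\gamma=\sum_{|\beta|\le|\gamma|}(-1)^{|\beta|}\genbin{\gamma}{\beta}q_\beta$, and interchanging the order of summation, the coefficient of $(-1)^{|\beta|}q_\beta(z)$ (for each $\beta$ with $|\beta|\le|\alpha|$) becomes $\sum_{l=|\beta|}^{|\alpha|}(-1)^{l}c^{l}\sum_{|\gamma|=l}\genbin{\alpha}{\gamma}\genbin{\gamma}{\beta}$, after grouping the intermediate index $\gamma$ by its degree $l=|\gamma|$.

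The heart of the computation is \eqref{BRCorr34} (applied with $|\beta|\le|\alpha|$, so that its admissible range is exactly $l\in\{|\beta|,\dots,|\alpha|\}$), which evaluates the inner sum as $\binom{|\alpha|-|\beta|}{\,l-|\beta|\,}\genbin{\alpha}{\beta}$. Setting $m=l-|\beta|$, the outer sum factors as $(-1)^{|\beta|}c^{|\beta|}\genbin{\alpha}{\beta}\sum_{m=0}^{|\alpha|-|\beta|}\binom{|\alpha|-|\beta|}{m}(-c)^{m}$, and the binomial theorem collapses the last sum to $(1-c)^{|\alpha|-|\beta|}$. The two factors of $(-1)^{|\beta|}$ then cancel, leaving precisely $\genbin{\alpha}{\beta}c^{|\beta|}(1-c)^{|\alpha|-|\beta|}q_\beta(z)$ summed over $|\beta|\le|\alpha|$, which is the assertion.

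I do not anticipate a genuine obstacle: the argument is essentially mechanical once the three ingredients (homogeneity of $p_\gamma$, the dual expansions \eqref{qinp}--\eqref{pinq}, and the convolution formula \eqref{BRCorr34}) are in place. The only points requiring care are the bookkeeping in the interchange of summation---checking that the surviving range of the intermediate degree is exactly $|\beta|\le l\le|\alpha|$, matching the hypothesis of \eqref{BRCorr34}---and the remark that the resulting formula is a polynomial identity in $c$ (each $p_\gamma$ contributes only the integer power $c^{|\gamma|}$), so the hypothesis $c\ge0$, needed merely to render $\sqrt{c}$ a real scalar, imposes no real restriction. As a quick sanity check, at $c=0$ only the $\beta=0$ term survives on the right, and the identity correctly reduces to $q_\alpha(0)=1$.
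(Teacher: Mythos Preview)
Your proposal is correct and follows essentially the same route as the paper: expand $q_\alpha$ in the $\{p_\gamma\}$ basis via \eqref{qinp}, use homogeneity to absorb the scaling, re-expand via \eqref{pinq}, interchange sums, and collapse with \eqref{BRCorr34} plus the binomial theorem. The only difference is cosmetic---the paper carries the sign $(-1)^{|\beta|}$ outside and writes the binomial sum with index $|\alpha|-l$ rather than your $m=l-|\beta|$.
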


\begin{proof}
Formula \eqref{qinp}, the homogeneity of the polynomials $(p_\beta)_\beta$, and \eqref{pinq},
imply the following chain of equalities:
\begin{multline}\label{20180704}
q_\alpha(\sqrt{c}z)=\sum_{|\gamma|\le|\alpha|}(-1)^{|\gamma|}\genbin{\alpha}{\gamma} c^{|\gamma|} p_\gamma(z)=
\sum_{|\gamma|\le|\alpha|} (-1)^{|\gamma|}\genbin{\alpha}{\beta}c^{|\gamma|}
\sum_{|\beta|\le|\gamma|}(-1)^{|\beta|}\genbin{\gamma}{\beta}q_\beta(z)\\
=\sum_{|\beta|\le|\alpha|}(-1)^{|\beta|} q_{\beta}(z)\sum_{|\beta|\le|\gamma|\le|\alpha|} (-1)^{|\gamma|} c^{|\gamma|}\genbin{\alpha}{\gamma}\genbin{\gamma}{\beta}.
\end{multline}
Now we use \eqref{BRCorr34} to expand the last sum in \eqref{20180704} as
\begin{equation*}
\sum_{l=|\beta|}^{|\alpha|}(-1)^l c^l \sum_{|\gamma|=l}\genbin{\alpha}{\gamma}\genbin{\gamma}{\beta}=
\genbin{\alpha}{\beta}\sum_{l=|\beta|}^{|\alpha|}(-c)^l \binom{|\alpha|-|\beta|}{|\alpha|-l}=
\genbin{\alpha}{\beta}(-c)^{|\beta|}(1-c)^{|\alpha|-|\beta|},
\end{equation*}
which proves the lemma.
\end{proof}

\subsection*{Acknowledgment} We thank Philippe Biane for guiding us patiently through intricacies of some of his papers. We also thank W{\l}odek Bryc and Jacek Weso{\l}owski for insightful comments.

\bibliographystyle{amsplain}

\def\cprime{$'$}
\providecommand{\bysame}{\leavevmode\hbox to3em{\hrulefill}\thinspace}
\providecommand{\MR}{\relax\ifhmode\unskip\space\fi MR }
\providecommand{\MRhref}[2]{%
	\href{http://www.ams.org/mathscinet-getitem?mr=#1}{#2}
}
\providecommand{\href}[2]{#2}

\end{document}